\newtheorem{theorem}{Theorem}[section]
\newtheorem{lemma}[theorem]{Lemma}
\newtheorem{corollary}[theorem]{Corollary}
\theoremstyle{definition}
\numberwithin{equation}{section}
\begin{document}

\title[Uniqueness under Spectral Variation in the Socle]{ Uniqueness under Spectral Variation in the Socle of a Banach Algebra }
\author{F. Schulz \and R. Brits}
\address{Department of Pure and Applied Mathematics, University of Johannesburg, South Africa}
\email{francoiss@uj.ac.za, rbrits@uj.ac.za}
\subjclass[2010]{ 46H05, 46H10, 46H15, 47B10}
\keywords{rank, socle, trace, spectrum, spectral radius}

\begin{abstract}
Let $A$ be a complex semisimple Banach algebra with identity, and denote by $\sigma'(x)$ and $\rho (x)$ the nonzero spectrum and spectral radius of an element $x \in A$, respectively. We explore the relationship between elements $a, b \in A$ that satisfy one of the following conditions: (1) $\sigma' (ax) \subseteq \sigma' (bx)$ for all $x \in A$, (2) $\rho (ax) \leq \rho (bx)$ for all $x \in A$. The latter problem was identified by Bre\v{s}ar and \v{S}penko in \cite{detelements}. In particular, we use these conditions to spectrally characterize prime Banach algebras amongst the class of Banach algebras with nonzero socles, as well as to obtain spectral characterizations of socles which are minimal two-sided ideals.  
\end{abstract}
	\parindent 0mm
	
	\maketitle

\section{Introduction}

By $A$ we denote a complex Banach algebra with identity element $\mathbf 1$ and invertible group $G(A)$. Moreover, it will be assumed throughout that $A$ is semisimple (i.e. the Jacobson radical of $A$, denoted $\mathrm{Rad}\:A$, only contains $0$). We will write $Z(A)$ for the center of $A$, that is, for the set of all $x \in A$ such that $xy=yx$ for all $y \in A$. For $x\in A$ we denote by $\sigma_A(x) =\left\{\lambda\in\mathbb C:\lambda\mathbf{1}-x\notin G(A)\right\}$, $\rho_{A} (x) = \sup\left\{\left|\lambda\right| : \lambda \in \sigma_{A} (x)\right\}$ and $\sigma_{A} '(x)=\sigma_A(x) -\{0\}$ the spectrum, spectral radius and nonzero spectrum of $x$, respectively. If the underlying algebra is clear from the context, then we shall agree to omit the subscript $A$ in the notation $\sigma_A(x)$, $\rho_{A} (x)$ and $\sigma_{A} '(x)$. This convention will also be followed in some of the forthcoming definitions. We shall also agree to reserve the notation $\cong$ exclusively for algebra isomorphisms. Moreover, we recall that an element $x$ of $A$ is called quasinilpotent if $\sigma (x) = \left\{0 \right\}$.\\

In \cite{detelements} M. Bre\v{s}ar and \v{S}. \v{S}penko consider two interesting problems which resulted from certain questions centered around Kaplansky's problem on spectrum preserving maps \cite{kaplansky1970algebraic}:\\

\noindent\emph{Problem} 1. Suppose that $a, b \in A$ satisfy $\sigma (ax) = \sigma (bx)$ for all $x \in A$. Does this imply $a = b$?\\

\noindent\emph{Problem} 2. Suppose that $a, b \in A$ satisfy
\begin{equation} 
\rho (ax) \leq \rho (bx) \;\,\mathrm{for\;all}\;\, x \in A.
\label{eq1.1}
\end{equation}
What is the relation between $a$ and $b$?\\

The first problem has been settled by G. Braatvedt and R. Brits in \cite{uniqueness}:

\begin{theorem}\cite[Theorem 2.1, Theorem 2.6]{uniqueness}
	\label{1.1}
	Let $a, b \in A$. Then the following are equivalent:
	\begin{itemize}
		\item[\textnormal{(i)}]
		$a = b$.
		\item[\textnormal{(ii)}]
		$\sigma (ax) = \sigma (bx)$ for all $x \in A$ such that $\rho \left(x-\mathbf{1}\right)<1$.
		\item[\textnormal{(iii)}]
		$\sigma (a+x) = \sigma (b+x)$ for all $x$ in some open neighbourhood of $-b$.
	\end{itemize}
\end{theorem}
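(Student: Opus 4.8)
The equivalences (i) $\Rightarrow$ (ii) and (i) $\Rightarrow$ (iii) are immediate: if $a=b$ then $ax=bx$ and $a+x=b+x$ for every $x$, so the relevant spectra coincide. The content therefore lies in the two converse implications, and the plan is to reduce both to a single core fact about the difference $c := a-b$. For (iii) I would substitute $x=-b+z$; as $x$ ranges over an open neighbourhood of $-b$, the element $z$ ranges over an open neighbourhood $V$ of $0$, and the hypothesis becomes the \emph{local spectral invariance} statement $\sigma(c+z)=\sigma(z)$ for all $z\in V$. The goal is then to show that this forces $c\in\mathrm{Rad}\,A$, which by semisimplicity means $c=0$, i.e. $a=b$. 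As a first step, taking $z=\varepsilon\mathbf{1}$ for small $\varepsilon>0$ gives $\sigma(c+\varepsilon\mathbf{1})=\varepsilon+\sigma(c)=\{\varepsilon\}$, so $c$ is quasinilpotent.

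The heart of the matter is to upgrade this \emph{local} equality of spectra to the \emph{global} radical condition, and this is where I expect the real difficulty to sit. My plan is to exploit that $V$ is a genuine neighbourhood (not merely a complex line) together with analyticity in the resolvent variable. Fixing $z\in V$ and writing $D(\lambda):=(\lambda\mathbf{1}-c-z)^{-1}-(\lambda\mathbf{1}-z)^{-1}$, the equality $\sigma(c+z)=\sigma(z)$ guarantees that $D$ is holomorphic on the common resolvent set $\mathbb{C}\setminus\sigma(z)$ and vanishes at infinity, with Laurent expansion $D(\lambda)=c\lambda^{-2}+\cdots$ for large $|\lambda|$. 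If one can force $D\equiv 0$ — using the freedom to vary $z$ throughout $V$ and a Liouville-type argument on the resulting holomorphic family — then the coefficient of $\lambda^{-2}$ yields $c=0$ at once. Equivalently, I would route the argument through Vesentini's theorem (subharmonicity of $\zeta\mapsto\rho(c+\zeta w)$ for each $w$) together with Zem\'anek's characterization $\mathrm{Rad}\,A=\{c:\rho(c+w)=\rho(w)\ \forall w\}$, the task being to propagate $\rho(c+\zeta w)=|\zeta|\rho(w)$ from small $\zeta$ out to $\zeta=1$.

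For (ii) $\Rightarrow$ (i) the plan is to convert the multiplicative hypothesis into an additive one amenable to the same machinery. Writing $x=\exp(h)$, one has $\rho(\exp(h)-\mathbf{1})$ small whenever $h$ is small, so the admissible set contains $\{\exp(h):\|h\|\ \text{small}\}$ and the hypothesis delivers $\sigma(a\exp(h))=\sigma(b\exp(h))$ for all such $h$. I would then treat $\zeta\mapsto\sigma(a\exp(\zeta h))$ and $\zeta\mapsto\sigma(b\exp(\zeta h))$ as analytic multifunctions on $\mathbb{C}$ that agree near $\zeta=0$ and, using the subharmonicity/rigidity of the spectral radius once more, pull the equality back to a statement separating $a$ from $b$ — for instance by differentiating the resulting scalar holomorphic data at $\zeta=0$ to obtain $\rho\bigl((a-b)w\bigr)=0$ for every $w\in A$. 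By the Pt\'ak--Zem\'anek description of the radical this gives $a-b\in\mathrm{Rad}\,A=\{0\}$.

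The main obstacle, in both converses, is the passage from a \emph{local, set-valued} spectral identity to a \emph{global, element-wise} algebraic one. Neither step is formal: analytic multifunctions obey no identity theorem, so the local-to-global propagation must genuinely use the subharmonicity of the spectral radius (Vesentini, Aupetit) and the richness of the full neighbourhood $V$, while the conversion of spectral data into the equation $a=b$ rests decisively on semisimplicity through Zem\'anek's spectral characterization of $\mathrm{Rad}\,A$.
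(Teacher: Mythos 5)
Note first that the paper does not actually prove Theorem \ref{1.1}; it is quoted verbatim from \cite{uniqueness}, so there is no internal proof to compare against. The nearest internal benchmark is Theorem \ref{3.1}, which strengthens (iii) $\Rightarrow$ (i) to the inequality $\rho(a+x)\leq\rho(b+x)$, and its proof supplies exactly the mechanism your outline is missing. Your reduction of (iii) to showing $c=a-b\in\mathrm{Rad}\:A$ from $\sigma(c+z)=\sigma(z)$ on a neighbourhood $V$ of $0$ is correct, as is the observation that $c$ is quasinilpotent; but the step you yourself flag as ``the task'' is the whole proof, and neither mechanism you propose closes it as stated. The Liouville argument on $D(\lambda)=(\lambda\mathbf{1}-c-z)^{-1}-(\lambda\mathbf{1}-z)^{-1}$ fails because $D$ is holomorphic only on $\mathbb{C}\setminus\sigma(z)$, not entire: for the relevant perturbations ($z$ quasinilpotent) it can have an essential singularity at $0$, so decay at infinity gives nothing, and varying $z$ over $V$ does not repair this. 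The subharmonicity route fails as formulated because you aim at the characterization $\rho(c+w)=\rho(w)$ for \emph{all} $w$ and want to propagate $\rho(c+\zeta w)=|\zeta|\rho(w)$ from small $\zeta$ to $\zeta=1$; a subharmonic function that agrees with another function on an open set need not agree globally, so this propagation is simply false as a general principle. The correct repair is to use the quasinilpotent form of Zem\'anek's theorem: for quasinilpotent $q$ one has $\rho(c+\lambda q)=0$ for $|\lambda|<k$, and then either the Scarcity Theorem (as in the proof of Theorem \ref{3.1}: $\sigma(c+\lambda q)=\{\alpha(\lambda)\}$ with $\alpha$ entire and vanishing on a disc, hence identically zero) or the fact that a subharmonic function equal to $-\infty$ on a set of positive capacity is identically $-\infty$ yields $\rho(c+q)=0$ for every quasinilpotent $q$, whence $c\in\mathrm{Rad}\:A=\{0\}$.

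The gap in (ii) $\Rightarrow$ (i) is more serious. ``Differentiating the resulting scalar holomorphic data at $\zeta=0$ to obtain $\rho\left((a-b)w\right)=0$ for every $w$'' is not a defined operation: $\zeta\mapsto\sigma(a\exp(\zeta h))$ is a set-valued map which, as you note yourself, obeys no identity theorem, and you give no procedure for extracting the element $(a-b)w$ from the coincidence of two such spectra near $\zeta=0$. (The spectrum of $a\exp(\zeta h)$ at $\zeta=0$ is $\sigma(a)$, which may be uncountable, so there is no finite family of holomorphic branches to differentiate.) Nothing in the outline converts the multiplicative hypothesis $\sigma(ax)=\sigma(bx)$ into any statement about the difference $a-b$, and no reduction of (ii) to (iii) is offered. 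As it stands this implication is asserted rather than proved; it genuinely requires the argument of \cite[Theorem 2.1, Theorem 2.6]{uniqueness}.
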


Problem 2, as to be expected, is slightly more intricate. Evidence such as \cite[Example 3.3]{detelements} suggests that the answer to this question may depend on the algebra or on the elements under consideration. Indeed, in the special situation where $b = \mathbf{1}$ it was found in \cite{scalars} that $a$ must then belong to $Z(A)$. Moreover, in \cite{detelements} Bre\v{s}ar and \v{S}penko investigated the special case where $A$ is a prime $C^{\ast}$-algebra. The conclusion in this case is that the elements $a$ and $b$ satisfying (\ref{eq1.1}) are necessarily linearly dependent. We recall that $A$ is a \textit{prime algebra} if all nonzero two-sided ideals $I$ and $J$ of $A$ satisfy $IJ \neq \left\{0 \right\}$. In particular, we will see that the linear dependence obtained in the prime $C^{\ast}$-algebra case extends to the case where $A$ is assumed to be prime with a nonzero socle. Furthermore, the consideration of Problem 2 leads to spectral characterizations of socles which are minimal two-sided ideals. Other characterizations of such socles were recently obtained by the authors and G. Braatvedt (cf. \cite[Theorem 3.8, Theorem 3.9]{tracesocleident} and \cite[Theorem 4.4]{commutatorsinsocle}).\\   

The notions of rank, trace and determinant are well-established for operator theory. Moreover, in a more general setting, these notions provide an analytic means to investigate the socle of a semisimple Banach algebra. This latter idea was made precise by B. Aupetit and H. Du. T. Mouton in \cite{aupetitmoutontrace} where they managed to show that these notions can be developed, without the use of operators, in a purely spectral and analytic manner. This paper is fundamental to our discussion here, so as in \cite{tracesocleident} we briefly summarize some of the theory in \cite{aupetitmoutontrace} before we proceed.\\

For each nonnegative integer $m$, let
$$\mathcal{F}_{m} = \left\{a \in A: \#\sigma'(xa) \leq m \;\,\mathrm{for\;all}\;\,x \in A \right\},$$
where the symbol $\#K$ denotes the number of distinct elements in a set $K\subseteq \mathbb C$. Following Aupetit and Mouton in \cite{aupetitmoutontrace}, we define the \textit{rank} of an element $a$ of $A$ as the smallest integer $m$ such that $a \in \mathcal{F}_{m}$, if it exists; otherwise the rank is infinite. In other words,
$$\mathrm{rank}\,(a) = \sup_{x \in A} \#\sigma'(xa).$$
If $a \in A$ is a finite-rank element, then
$$E(a) = \left\{x \in A : \#\sigma'(xa) = \mathrm{rank}\,(a) \right\}$$
is a dense open subset of $A$ \cite[Theorem 2.2]{aupetitmoutontrace}. A finite-rank element $a$ of $A$ is said to be a \textit{maximal finite-rank element} if $\mathrm{rank}\,(a) = \#\sigma'(a)$. With respect to $\mathrm{rank}$ it is useful to know results such as Jacobson's Lemma \cite[Lemma 3.1.2]{aupetit1991primer}, the Spectral Mapping Theorem \cite[Theorem 3.3.3]{aupetit1991primer}) and the Scarcity Theorem \cite[Theorem 3.4.25]{aupetit1991primer}. It can be shown \cite[Corollary 2.9]{aupetitmoutontrace} that the \textit{socle}, written $\mathrm{Soc}\:A$, of a semisimple Banach algebra $A$ coincides with the collection $\bigcup_{m = 0}^{\infty} \mathcal{F}_{m}$ of finite rank elements. We mention a few elementary properties of the rank of an element \cite[p. 117]{aupetitmoutontrace}. Firstly, $\#\sigma'(a) \leq \mathrm{rank}\,(a)$ for all $a \in A$. Furthermore, $\mathrm{rank}\,(xa) \leq \mathrm{rank}\,(a)$ and $\mathrm{rank}\,(ax)\leq \mathrm{rank}\,(a)$ for all $x, a \in A$, with equality if $x \in G(A)$. Moreover, the rank is lower semicontinuous on $\mathrm{Soc}\:A$. It is also subadditive, i.e. $\mathrm{rank}\,(a+b) \leq \mathrm{rank}\,(a) + \mathrm{rank}\,(b)$ for all $a, b \in A$ \cite[Theorem 2.14]{aupetitmoutontrace}. Finally, if $p$ is a projection of $A$, then $p$ has rank one if and only if $p$ is a minimal projection, that is, if $pAp = \mathbb{C}p$ \cite[p. 117]{aupetitmoutontrace}. It is also worth mentioning here that a projection $p$ is minimal if and only if $Ap$ is a nontrivial left ideal which does not contain any left ideals other than $\left\{0 \right\}$ and itself, that is, if and only if $Ap$ is a nontrivial minimal left ideal \cite[Lemma 30.2]{bonsall1973complete}. A similar result holds true for the right ideal $pA$. We will also define a \textit{minimal two-sided ideal} in this manner, that is, as a two-sided ideal which does not contain any two-sided ideals other than $\left\{0 \right\}$ and itself.\\

The following result is fundamental to the theory developed in \cite{aupetitmoutontrace} and is mentioned here for convenient referencing later on:\\

\noindent\textbf{Diagonalization Theorem} \cite[Theorem 2.8]{aupetitmoutontrace}: Let $a \in A$ be a nonzero maximal finite-rank element and denote by $\lambda_{1}, \ldots, \lambda_{n}$ its nonzero distinct spectral values. Then there exists $n$ orthogonal minimal projections $p_{1}, \ldots, p_{n} \in Aa \cap aA$ such that 
$$a = \lambda_{1}p_{1} + \cdots + \lambda_{n}p_{n}.$$

In particular, the Diagonalization Theorem easily implies the well-known result that every element of the socle is \emph{Von Neumann regular}, that is, for each $a \in \mathrm{Soc}\:A$, there exists an $x \in \mathrm{Soc}\:A \subseteq A$ such that $a = axa$ \cite[Corollary 2.10]{aupetitmoutontrace}.\\

If $a \in \mathrm{Soc}\:A$ we define the \textit{trace} of $a$ as in \cite{aupetitmoutontrace} by
$$\mathrm{Tr}\,(a) = \sum_{\lambda \in \sigma (a)} \lambda m\left(\lambda, a\right),$$
where $m(\lambda,a)$ is the \emph{multiplicity of $a$ at $\lambda$}. A brief description of the notion of multiplicity in the abstract case goes as follows (for particular details one should consult \cite{aupetitmoutontrace}): Let $a \in \mathrm{Soc}\:A$, $\lambda\in\sigma(a)$ and let $B(\lambda,r)$ be an open disk centered at $\lambda$ such that $B(\lambda,r)$ contains no other points of $\sigma(a)$. It can be shown \cite[Theorem 2.4]{aupetitmoutontrace} that there exists an open ball, say $U\subseteq A$, centered at $\mathbf{1}$ such that $\#\left[\sigma(xa)\cap B(\lambda,r) \right]$ is constant as $x$ runs through $E(a)\cap U$. This constant integer is the multiplicity of $a$ at $\lambda$. It can also be shown that $m\left(\lambda,a\right) \geq 1$ and
\begin{equation}
\sum_{\alpha \in \sigma (a)} m(\alpha, a) = \left\{\begin{array}{cl} 1+\mathrm{rank}\,(a) & \mathrm{if}\;\,0 \in \sigma (a) \\
\mathrm{rank}\,(a) & \mathrm{if}\;\,0 \notin \sigma (a).
\end{array}\right.
\label{beq0}
\end{equation} 
Furthermore, we note that the trace has the following useful properties:
\begin{itemize}
	\item[(i)]
	$\mathrm{Tr}$ is a linear functional on $\mathrm{Soc}\:A$ (\cite[Theorem 3.3]{aupetitmoutontrace} and \cite[Lemma 2.1]{tracesocleident}).
	\item[(ii)]
	$\mathrm{Tr}\,(ab) = \mathrm{Tr}\,(ba)$ for each $a \in \mathrm{Soc}\:A$ and $b \in A$ \cite[Corollary 2.5]{tracesocleident}.
	\item[(iii)]
	For any $a \in A$, if $\mathrm{Tr}\,(ax) = 0$ for each $x \in \mathrm{Soc}\:A$, then $a\mathrm{Soc}\:A = \left\{0 \right\}$. Moreover, if $a \in \mathrm{Soc}\:A$, then $a=0$ \cite[Corollary 3.6]{aupetitmoutontrace}.
	\item[(iv)]
	If $f$ is an analytic function from a domain $D$ of $\mathbb{C}$ into $\mathrm{Soc}\:A$, then $\lambda \mapsto \mathrm{Tr}\,\left(f(\lambda)\right)$ is holomorphic on $D$ \cite[Theorem 3.1]{aupetitmoutontrace}.
\end{itemize}

Let $\lambda \in \sigma (a)$ and suppose that $B(\lambda,2r)$ separates $\lambda$ from the remaining spectrum of $a$. Let $f_{\lambda}$ be the holomorphic function which takes the value $1$ on $B(\lambda,r)$ and the value $0$ on $\mathbb{C} - \overline{B}(\lambda,r)$. If we now let $\Gamma_{0}$ be a smooth contour which surrounds $\sigma (a)$ and is contained in the domain of $f_{\lambda}$, then
$$p\left(\lambda, a\right) = f_{\lambda} (a) = \frac{1}{2\pi i}\int_{\Gamma_{0}} f_{\lambda} (\alpha) \left(\alpha \mathbf{1}-a\right)^{-1}\,d\alpha$$
is referred to as the \textit{Riesz projection} associated with $a$ and $\lambda$. By the Holomorphic Functional Calculus, Riesz projections associated with $a$ and distinct spectral values are orthogonal, all commute with $a$ and for $\lambda \neq 0$
\begin{equation}
p\left(\lambda, a\right) = \frac{a}{2\pi i} \int_{\Gamma_{0}} \frac{f_{\lambda}\left(\alpha\right)}{\alpha} \left(\alpha\mathbf{1} - a\right)^{-1}\,d\alpha \in Aa \cap aA. \label{beq1}
\end{equation}
It is also worth mentioning that the orthogonal minimal projections obtained in the conclusion of the Diagonalization Theorem are in fact the Riesz projections of the maximal finite-rank element associated with each of its corresponding nonzero spectral values.\\

In the operator case, $A=B(X)$ (bounded linear operators on a Banach space $X$), the ``spectral" rank and trace both coincide with the respective classical operator definitions.

\section{Uniqueness under Spectral Variation in the Socle}

Let $a \in A$. J. Zem\'{a}nek has shown that $\rho(a+x) = 0$ for all quasinilpotent $x$ in $A$ if and only if $a \in \mathrm{Rad}\:A$ \cite[Theorem 5.3.1]{aupetit1991primer}. In order to get some feeling for the subject matter, we start by utilizing the aforementioned result to show that condition (iii) in Theorem \ref{1.1} can be substantially relaxed:

\begin{theorem}\label{3.1}
	Let $a, b \in A$. Then the following are equivalent:
	\begin{itemize}
		\item[\textnormal{(i)}]
		$a = b$.
		\item[\textnormal{(ii)}]
		$\rho \left(a + x\right) \leq \rho \left(b+x\right)$ for all $x$ in some open neighbourhood of $-b$.
	\end{itemize}
\end{theorem}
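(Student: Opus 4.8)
The implication (i) $\Rightarrow$ (ii) is immediate, since $a = b$ forces $\rho(a+x) = \rho(b+x)$ for every $x \in A$. So the plan is to establish (ii) $\Rightarrow$ (i), and the natural first move is to recentre the additive hypothesis at the origin. Writing $c = a - b$ and substituting $x = -b + y$, the condition in (ii) becomes $\rho(c + y) \le \rho(y)$ for all $y$ in some open neighbourhood $V$ of $0$. Since the goal $a = b$ is just $c = 0$, and since $A$ is semisimple, I would aim to show $c \in \mathrm{Rad}\,A$ and then invoke the Zemánek result quoted at the start of the section: it suffices to prove that $\rho(c + w) = 0$ for every quasinilpotent $w \in A$.

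Fix a quasinilpotent $w$. For each $\lambda \in \mathbb{C}$ the element $\lambda w$ is again quasinilpotent, and for $|\lambda|$ small enough that $\lambda w \in V$ the recentred hypothesis yields $\rho(c + \lambda w) \le \rho(\lambda w) = 0$. Thus $c + \lambda w$ is quasinilpotent for all $\lambda$ in some disk about $0$. (In particular, the choice $w = 0$ already shows that $c$ itself is quasinilpotent.) The remaining task is to upgrade this from a small disk to all of $\mathbb{C}$, and in particular to the value $\lambda = 1$.

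This passage from local to global vanishing of the spectral radius is the crux, and I expect it to be the main obstacle. I would handle it with the subharmonicity machinery that underlies \cite{aupetitmoutontrace}: the map $g(\lambda) = c + \lambda w$ is an entire $A$-valued function, so by Vesentini's theorem (see \cite{aupetit1991primer}) the function $\lambda \mapsto \log\rho\bigl(g(\lambda)\bigr)$ is subharmonic on $\mathbb{C}$. By the previous paragraph it equals $-\infty$ on a disk about $0$, a set of positive measure; since a subharmonic function on a domain that fails to be locally integrable must be identically $-\infty$, we conclude $\log\rho\bigl(g(\lambda)\bigr) \equiv -\infty$, that is, $\rho(c + \lambda w) = 0$ for every $\lambda$. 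Evaluating at $\lambda = 1$ gives $\rho(c + w) = 0$.

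Since $w$ was an arbitrary quasinilpotent element, Zemánek's theorem forces $c \in \mathrm{Rad}\,A$, and semisimplicity then gives $c = 0$, i.e. $a = b$. The only delicate point requiring careful justification is that $\log\rho\bigl(g(\lambda)\bigr)$ is genuinely subharmonic on all of $\mathbb{C}$ (so that the ``$-\infty$ on a set of positive measure implies identically $-\infty$'' principle applies); the reduction to the origin and the scaling of the quasinilpotent $w$ are routine.
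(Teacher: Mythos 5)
Your proof is correct, and its overall strategy coincides with the paper's: both reduce (ii) to showing that $\rho(a-b+q)=0$ for every quasinilpotent $q$, obtain this for $a-b+\lambda q$ on a small disk of $\lambda$'s by applying the hypothesis at $x=-b+\lambda q$, propagate the vanishing to all of $\mathbb{C}$, and then invoke Zem\'anek's characterization of the radical together with semisimplicity. The one genuine difference is the tool used for the local-to-global step. The paper applies the Scarcity Theorem to $f(\lambda)=a-b+\lambda q$ to conclude that $\sigma(f(\lambda))$ is a singleton $\{\alpha(\lambda)\}$ for every $\lambda$, notes that $\alpha$ is then entire by \cite[Corollary 3.4.18]{aupetit1991primer}, and finishes with the identity theorem; you instead use Vesentini's theorem that $\lambda\mapsto\log\rho(f(\lambda))$ is subharmonic, together with the fact that a subharmonic function equal to $-\infty$ on a set of positive measure is identically $-\infty$. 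Both are standard results from \cite{aupetit1991primer} and both close the gap correctly; the subharmonicity route is marginally more economical, while the Scarcity route stays within the circle of ideas (finiteness and holomorphic variation of spectra) used throughout the rest of the paper. The point you flag as delicate is not actually a problem: subharmonicity of $\log\rho$ along analytic families is exactly Vesentini's theorem as stated in \cite{aupetit1991primer}, with the convention that the function identically equal to $-\infty$ is admitted as subharmonic, so the positive-measure argument applies verbatim.
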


\begin{proof}
	Certainly, (i) $\Rightarrow$ (ii). We therefore proceed to show that (ii) $\Rightarrow$ (i). We claim that $\rho \left(a - b + q\right) = 0$ for all quasinilpotent elements $q$ in $A$: Let $q$ be any quasinilpotent element in $A$. Consider the analytic function $f: \mathbb{C} \rightarrow A$ defined by $f(\lambda) = a-b+ \lambda q$. By hypothesis and the Spectral Mapping Theorem, there exists a real number $k > 0$ such that $\rho \left(a - b + \lambda q\right) \leq \rho \left(\lambda q\right) = 0$ whenever $\left|\lambda\right| < k$. Hence, $\sigma \left(f\left(\lambda\right)\right) = \left\{0 \right\}$ whenever $\left|\lambda\right| < k$. By the Scarcity Theorem we may therefore conclude that $\sigma \left(f\left(\lambda\right)\right) = \left\{\alpha \left(\lambda\right) \right\}$ for all $\lambda \in \mathbb{C}$, where $\alpha$ is a mapping from $\mathbb{C}$ into $\mathbb{C}$. By \cite[Corollary 3.4.18]{aupetit1991primer}, $\alpha$ is an entire function. However, $\alpha \left(\lambda\right) = 0$ whenever $\left|\lambda\right| < k$, and so, from basic Complex Analysis it must be the case that $\alpha \left(\lambda\right) = 0$ for all $\lambda \in \mathbb{C}$. This proves our claim. Consequently, $a- b \in \mathrm{Rad}\:A$ by \cite[Theorem 5.3.1]{aupetit1991primer}. Thus, by semisimplicity we have the result.
\end{proof}

\begin{theorem}\label{3.2}
	Let $a, b \in \mathrm{Soc}\:A$. Then $a= b$ if and only if any one of the following holds true:
	\begin{itemize}
		\item[\textnormal{(i)}]
		$\sigma (ax) = \sigma (bx)$ for all rank one elements $x \in A$.
		\item[\textnormal{(ii)}]
		$\sigma (a+x) = \sigma (b+x)$ for all rank one elements $x \in A$.
	\end{itemize}
\end{theorem}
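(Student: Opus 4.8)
Both forward implications are immediate, so in each case the real task is to recover $a=b$ from a spectral condition imposed only on rank one perturbations. My plan is to convert each hypothesis into the single identity $\mathrm{Tr}\,((a-b)x)=0$ for all $x\in\mathrm{Soc}\:A$; once this is available, property (iii) of the trace, together with $a-b\in\mathrm{Soc}\:A$, yields $a=b$. Two preliminary facts drive the reductions. First, if $y\in A$ has rank at most one, then its trace is just the sum of its nonzero spectrum: by (\ref{beq0}) the nonzero spectral value of such a $y$ (when it occurs) carries multiplicity one, so $\mathrm{Tr}\,(y)=\sum_{\mu\in\sigma'(y)}\mu$. Second, $\mathrm{Soc}\:A$ is the linear span of its minimal projections; this follows by writing $\mathrm{Soc}\:A$ as a sum of minimal left ideals $Ap$ and decomposing a typical generator as $yp=(\mathrm{Tr}\,(yp)-1)p+\big(p+(1-p)yp\big)$, in which $p+(1-p)yp$ is again a minimal projection. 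In particular the rank one elements span $\mathrm{Soc}\:A$ as well.

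For (i), fix a rank one element $x$. Then $ax$ and $bx$ lie in $\mathrm{Soc}\:A$ and have rank at most one, so the first fact gives $\mathrm{Tr}\,(ax)=\sum_{\mu\in\sigma'(ax)}\mu$ and similarly for $bx$. Since $\sigma(ax)=\sigma(bx)$ forces $\sigma'(ax)=\sigma'(bx)$, we get $\mathrm{Tr}\,(ax)=\mathrm{Tr}\,(bx)$, hence $\mathrm{Tr}\,((a-b)x)=0$, for every rank one $x$. Because the rank one elements span $\mathrm{Soc}\:A$, linearity of the trace extends this to $\mathrm{Tr}\,((a-b)x)=0$ for all $x\in\mathrm{Soc}\:A$, and property (iii) completes (i).

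For (ii) the additive hypothesis does not hand us a trace identity for free, and manufacturing one is the crux of the proof. Here I would test against the rank one family $x=\lambda p$, where $p$ is a minimal projection and $\lambda\in\mathbb{C}\setminus\{0\}$, so that $\sigma(a+\lambda p)=\sigma(b+\lambda p)$. Writing $a+\lambda p=\lambda\big(p+\tfrac1\lambda a\big)$ exhibits this, for large $|\lambda|$, as a small analytic perturbation of the minimal projection $p$; since the spectral value $1$ of $p$ is simple with Riesz projection $p$, analytic perturbation of this simple eigenvalue (legitimate via the holomorphic functional calculus and the multiplicity theory of \cite{aupetitmoutontrace}) produces a single spectral value $\beta_a(\lambda)=\lambda+\mathrm{Tr}\,(pa)+O(1/\lambda)$ of $a+\lambda p$ of modulus $\sim|\lambda|$, with all remaining spectral values of order $o(\lambda)$; the same applies to $b$. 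For large $|\lambda|$ this $\beta_a(\lambda)$ is the unique spectral value of large modulus, so matching it inside $\sigma(a+\lambda p)=\sigma(b+\lambda p)$ gives $\beta_a(\lambda)=\beta_b(\lambda)$, and letting $\lambda\to\infty$ yields $\mathrm{Tr}\,(pa)=\mathrm{Tr}\,(pb)$. Thus $\mathrm{Tr}\,((a-b)p)=0$ for every minimal projection $p$; since these span $\mathrm{Soc}\:A$, we again reach $\mathrm{Tr}\,((a-b)x)=0$ for all $x\in\mathrm{Soc}\:A$ and conclude by property (iii). I expect the justification of the expansion $\beta_a(\lambda)=\lambda+\mathrm{Tr}\,(pa)+O(1/\lambda)$---identifying the first-order coefficient as $\mathrm{Tr}\,(pa)$ and controlling the remaining spectrum---to be the main obstacle, and it is precisely here that the minimality (hence the simplicity of the eigenvalue $1$) of $p$ is indispensable.
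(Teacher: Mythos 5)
Your treatment of (i) is correct and essentially the paper's argument: both reduce the hypothesis to $\mathrm{Tr}\,\left(\left(a-b\right)x\right)=0$ for all rank one $x$, extend by linearity over a rank one spanning set of $\mathrm{Soc}\:A$, and finish with \cite[Corollary 3.6]{aupetitmoutontrace}. (The paper spans with elements $up_{i}$, $u\in G(A)$, via the Diagonalization Theorem and the density of $E(y)$; your decomposition $yp=\left(\mathrm{Tr}\,(yp)-1\right)p+\left(p+\left(\mathbf{1}-p\right)yp\right)$ is a correct alternative, since $\left(\mathbf{1}-p\right)yp$ squares to zero and hence $p+\left(\mathbf{1}-p\right)yp=\left(\mathbf{1}+\left(\mathbf{1}-p\right)yp\right)p$ is again a rank one projection.)

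For (ii) your route is genuinely different from the paper's, and the step you yourself flag as the main obstacle is a real gap as written. You need, for large $\left|\lambda\right|$, that $\sigma\left(a+\lambda p\right)$ has exactly one point $\beta_{a}(\lambda)$ of modulus comparable to $\left|\lambda\right|$ and that $\beta_{a}(\lambda)-\lambda\to\mathrm{Tr}\,(pa)$. Both are true but neither is free: the first needs upper semicontinuity of the spectrum for $p+\mu a$ at $\mu=0$ together with the constancy of $\mathrm{Tr}\,\left(P(\mu)\right)$ for the Riesz projection $P(\mu)$ of $p+\mu a$ over a small circle about $1$ (so that $P(\mu)$ stays rank one); the second needs the identity $\gamma(\mu)=\mathrm{Tr}\,\left(\left(p+\mu a\right)P(\mu)\right)$, valid because $\left(p+\mu a\right)P(\mu)=\gamma(\mu)P(\mu)$ when $P(\mu)$ is minimal, followed by the computation $\gamma'(0)=\mathrm{Tr}\,\left(aP(0)\right)+\mathrm{Tr}\,\left(pP'(0)\right)=\mathrm{Tr}\,(ap)$, using $pP'(0)p=0$ (differentiate $P(\mu)^{2}=P(\mu)$). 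This Riesz-projection bookkeeping is precisely the content of your asserted expansion, and without it the proof of (ii) is incomplete. The paper sidesteps all of it: fixing $\lambda\notin\sigma(a)\cup\sigma(b)$ and $\alpha\neq 0$, applying the hypothesis to the rank one element $\alpha^{-1}x$ and factoring $\lambda\mathbf{1}-\left(a+\alpha^{-1}x\right)=\left(\lambda\mathbf{1}-a\right)\left(\mathbf{1}-\left(\lambda\mathbf{1}-a\right)^{-1}\alpha^{-1}x\right)$ converts the additive hypothesis into $\sigma'\left(\left(\lambda\mathbf{1}-a\right)^{-1}x\right)=\sigma'\left(\left(\lambda\mathbf{1}-b\right)^{-1}x\right)$ for every rank one $x$; since the resolvent identity places $\left(\lambda\mathbf{1}-a\right)^{-1}-\left(\lambda\mathbf{1}-b\right)^{-1}$ in $\mathrm{Soc}\:A$, the argument of case (i) applied to the two resolvents gives $a=b$ with no perturbation theory at all. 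Either supply the computation above or adopt the resolvent reduction.
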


\begin{proof}
	Obviously, if $a=b$ then conditions (i) and (ii) both hold. So let $a, b \in \mathrm{Soc}\:A$ and assume that condition (i) holds. Then $\mathrm{Tr}\,(ax) = \mathrm{Tr}\,(bx)$ for all rank one elements $x \in A$. Let $y \in \mathrm{Soc}\:A$ be arbitrary. Clearly, $\mathrm{Tr}\,(ay) = \mathrm{Tr}\,(by)$ if $y=0$. So assume that $y \neq 0$. By the Diagonalization Theorem and the density of $E(y)$ there exist rank one projections $p_{1}, \ldots, p_{n}$, $\alpha_{1}, \ldots, \alpha_{n} \in \mathbb{C}$ and a $u \in G(A)$ such that $y = \alpha_{1}up_{1} + \cdots + \alpha_{n}up_{n}$. Thus, by the linearity of the trace we readily obtain $\mathrm{Tr}\,(ay) = \mathrm{Tr}\,(by)$ for all $y \in \mathrm{Soc}\:A$. Consequently, $\mathrm{Tr}\,\left(\left(a-b\right)y\right) = 0$ for all $y \in \mathrm{Soc}\:A$. Thus, since $a-b \in \mathrm{Soc}\:A$, it follows from \cite[Corollary 3.6]{aupetitmoutontrace} that $a-b = 0$. Next take $a, b \in \mathrm{Soc}\:A$ and assume that condition (ii) holds. Fix any $\lambda \notin \sigma (a) \cup \sigma (b)$ and $0 \neq \alpha \in \mathbb{C}$. If $x \in A$ has rank one, then we have
	$$\lambda \mathbf{1} - \left(a + \alpha^{-1}x\right) \in G(A) \Leftrightarrow \lambda \mathbf{1} - \left(b + \alpha^{-1}x\right) \in G(A).$$
	Consequently,
	$$\left(\lambda \mathbf{1}-a\right)\left(\mathbf{1} + \left(\lambda \mathbf{1}-a\right)^{-1}\alpha^{-1}x\right) \in G(A) \Leftrightarrow \left(\lambda \mathbf{1}-b\right)\left(\mathbf{1} + \left(\lambda \mathbf{1}-b\right)^{-1}\alpha^{-1}x\right) \in G(A).$$
	Since the first term on the left of each expression is invertible, it follows that
	$$\alpha \in \sigma \left(\left(\lambda \mathbf{1}-a\right)^{-1}x\right) \Leftrightarrow \alpha \in \sigma \left(\left(\lambda \mathbf{1}-b\right)^{-1}x\right).$$
	Hence, $\sigma' \left(\left(\lambda \mathbf{1}-a\right)^{-1}x\right) = \sigma' \left(\left(\lambda \mathbf{1}-b\right)^{-1}x\right)$ for all rank one elements $x \in A$. Thus, $\mathrm{Tr}\,\left(\left(\lambda \mathbf{1}-a\right)^{-1}x\right) = \mathrm{Tr}\,\left(\left(\lambda \mathbf{1}-b\right)^{-1}x\right)$ for all rank one elements $x \in A$. Moreover, since
	$$\left(\lambda \mathbf{1}-a\right)^{-1} - \left(\lambda \mathbf{1}-b\right)^{-1} = \left(\lambda \mathbf{1}-a\right)^{-1}\left(a-b\right)\left(\lambda \mathbf{1}-b\right)^{-1} \in \mathrm{Soc}\:A,$$
	it follows as before from \cite[Corollary 3.6]{aupetitmoutontrace} that $\left(\lambda \mathbf{1}-a\right)^{-1} - \left(\lambda \mathbf{1}-b\right)^{-1} = 0$. Hence, $a = b$, which establishes the result.
\end{proof}

Let $J$ be a two-sided ideal of $A$. Denote by $l\left(J\right)$ the \textit{left-annihilator} of $J$, that is,
$$l\left(J\right) := \left\{x \in A: xJ=\left\{0\right\}  \right\}.$$
Similarly, we define the \textit{right-annihilator} of $J$ by
$$r\left(J\right) := \left\{x \in A: Jx=\left\{0\right\}  \right\}.$$

\begin{theorem}\label{3.3}
	Suppose that $\mathrm{Soc}\:A \neq \left\{0 \right\}$. Then $l\left(\mathrm{Soc}\:A\right) = \left\{0 \right\}$ if and only if the following are equivalent for any $a, b \in A$:
	\begin{itemize}
		\item[\textnormal{(i)}]
		$a = b$.
		\item[\textnormal{(ii)}]
		$\sigma (ax) = \sigma (bx)$ for all rank one elements $x \in A$.
		\item[\textnormal{(iii)}]
		$\sigma (a+x) = \sigma (b+x)$ for all rank one elements $x \in A$.
	\end{itemize}
\end{theorem}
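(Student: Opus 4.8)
The statement is a biconditional, so the plan splits into two directions. For the forward implication I would assume $l\left(\mathrm{Soc}\:A\right) = \left\{0\right\}$ and establish the full equivalence (i) $\Leftrightarrow$ (ii) $\Leftrightarrow$ (iii); for the reverse implication I would assume the three conditions are equivalent for all $a, b \in A$ and deduce $l\left(\mathrm{Soc}\:A\right) = \left\{0\right\}$ by contraposition. Since (i) trivially implies both (ii) and (iii), all the work in the forward direction lies in proving (ii) $\Rightarrow$ (i) and (iii) $\Rightarrow$ (i).

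For (ii) $\Rightarrow$ (i) I would argue as in the first half of Theorem~\ref{3.2}. If $x$ has rank one, then $ax$ and $bx$ have rank at most one and hence lie in $\mathrm{Soc}\:A$, with trace equal to their unique nonzero spectral value (the relevant multiplicity being one, by (\ref{beq0})); thus $\sigma (ax) = \sigma (bx)$ forces $\mathrm{Tr}\,(ax) = \mathrm{Tr}\,(bx)$. Using the Diagonalization Theorem together with the density of $E(y)$ exactly as in Theorem~\ref{3.2}, every $y \in \mathrm{Soc}\:A$ is a linear combination of rank one elements of the form $up_{i}$ with $u \in G(A)$, so the linearity of the trace upgrades this to $\mathrm{Tr}\,\left(\left(a-b\right)y\right) = 0$ for all $y \in \mathrm{Soc}\:A$.

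Here lies the essential difference from Theorem~\ref{3.2}, and the step I expect to be the main obstacle: because $a, b$ are now arbitrary elements of $A$ rather than of $\mathrm{Soc}\:A$, the difference $a - b$ need not belong to $\mathrm{Soc}\:A$, so I cannot invoke \cite[Corollary 3.6]{aupetitmoutontrace} to conclude $a - b = 0$ directly. Instead I would appeal to the first assertion of trace property (iii), which yields $\left(a-b\right)\mathrm{Soc}\:A = \left\{0\right\}$, that is, $a - b \in l\left(\mathrm{Soc}\:A\right)$; the hypothesis $l\left(\mathrm{Soc}\:A\right) = \left\{0\right\}$ then gives $a = b$. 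This is precisely where the annihilator condition enters. The implication (iii) $\Rightarrow$ (i) should follow the same template: fixing $\lambda \notin \sigma (a) \cup \sigma (b)$ and running the resolvent computation of Theorem~\ref{3.2} produces $\mathrm{Tr}\,\left(\left(\lambda\mathbf{1}-a\right)^{-1}x\right) = \mathrm{Tr}\,\left(\left(\lambda\mathbf{1}-b\right)^{-1}x\right)$ for all rank one $x$, whereupon the same trace-linearity and annihilator argument forces $\left(\lambda\mathbf{1}-a\right)^{-1} = \left(\lambda\mathbf{1}-b\right)^{-1}$ and hence $a = b$.

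For the reverse implication I would argue contrapositively. Suppose $l\left(\mathrm{Soc}\:A\right) \neq \left\{0\right\}$ and choose $0 \neq c \in l\left(\mathrm{Soc}\:A\right)$. Taking $a = c$ and $b = 0$, every rank one $x$ lies in $\mathrm{Soc}\:A$, so $cx = 0$ and therefore $\sigma (ax) = \left\{0\right\} = \sigma (bx)$; thus (ii) holds while (i) fails, so the stated equivalence cannot hold. Consequently, if (i), (ii) and (iii) are equivalent for all $a, b \in A$, then necessarily $l\left(\mathrm{Soc}\:A\right) = \left\{0\right\}$, which completes the biconditional.
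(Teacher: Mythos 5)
Your proposal is correct and follows essentially the same route as the paper: the forward direction runs the trace argument of Theorem~\ref{3.2} and, exactly as in the paper, replaces the final appeal to membership of $a-b$ in $\mathrm{Soc}\:A$ by the conclusion $\left(a-b\right)\mathrm{Soc}\:A = \left\{0\right\}$ from \cite[Corollary 3.6]{aupetitmoutontrace} together with the hypothesis $l\left(\mathrm{Soc}\:A\right) = \left\{0\right\}$, and likewise for the resolvent version. Your contrapositive for the reverse direction, taking $a = c$ and $b = 0$, is just the paper's counterexample (which uses $a+y$ versus $y$ for arbitrary $y \in \mathrm{Soc}\:A$) specialized to $y = 0$, so there is no substantive difference.
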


\begin{proof}
	Suppose first that $l\left(\mathrm{Soc}\:A\right) = \left\{0 \right\}$ and let $a, b \in A$. Certainly, (i) $\Rightarrow$ (ii) and (i) $\Rightarrow$ (iii). Using the argument in the proof of Theorem \ref{3.2} we see that (ii) implies $\mathrm{Tr}\,\left(\left(a-b\right)y\right) = 0$ for all $y \in \mathrm{Soc}\:A$. Hence, by \cite[Corollary 3.6]{aupetitmoutontrace} it follows that $\left(a-b\right)\mathrm{Soc}\:A = \left\{0 \right\}$. Hence, $a-b \in l\left(\mathrm{Soc}\:A\right) = \left\{0 \right\}$, so (ii) $\Rightarrow$ (i). Similarly, the argument in the proof of Theorem \ref{3.2} can also be used to show that (iii) implies $\mathrm{Tr}\,\left(\left(\left(\lambda \mathbf{1}-a\right)^{-1}-\left(\lambda \mathbf{1}-b\right)^{-1}\right)y\right) = 0$ for all $y \in \mathrm{Soc}\:A$, where $\lambda \notin \sigma (a) \cup \sigma (b)$ is fixed. Hence, by \cite[Corollary 3.6]{aupetitmoutontrace} 
	$$\left(\lambda \mathbf{1}-a\right)^{-1}-\left(\lambda \mathbf{1}-b\right)^{-1} \in l\left(\mathrm{Soc}\:A\right) = \left\{0 \right\}.$$
	Thus, (iii) $\Rightarrow$ (i). This proves the forward implication. For the converse, we argue contrapositively. Suppose that $l\left(\mathrm{Soc}\:A\right) \neq \left\{0 \right\}$. Let $0 \neq a \in l\left(\mathrm{Soc}\:A\right)$ be fixed. Moreover, pick $y \in \mathrm{Soc}\:A$. Since $a \neq 0$, $a+y \neq y$. However, since $a\mathrm{Soc}\:A = \left\{0 \right\}$, it follows that $\sigma \left(\left(a+y\right)x\right) = \sigma (yx)$ for all rank one elements $x \in A$. Hence, (ii) $\not\Rightarrow$ (i). This completes the proof.
\end{proof}

In \cite[Theorem 2.5]{uniqueness} it was shown that properties (i) to (iii) are equivalent for any two bounded linear operators on a Banach space $X$. Consequently, Theorem \ref{3.3} implies the well-known fact that $l\left(\mathrm{Soc}\:B(X)\right) = \left\{0 \right\}$.

\begin{lemma}\label{3.4}
	Suppose that $\mathrm{Soc}\:A$ is a minimal two-sided ideal. Let $a, b \in A$ and suppose that $b = pt$, where $p = p^{2} \in \mathrm{Soc}\:A$ and $t \in G(A)$. If $\rho (ax) \leq \rho (bx)$ for all $x \in A$, then $a = \lambda b$ for some $\lambda \in \mathbb{C}$ with $\left|\lambda\right| \leq 1$. 
\end{lemma}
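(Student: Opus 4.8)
The plan is to exploit the hypothesis $\rho(ax) \le \rho(bx)$ for all $x$ together with the special structure $b = pt$ with $p$ a projection in the socle and $t$ invertible. Since $t \in G(A)$, I can substitute $x = t^{-1}y$ in the hypothesis to reduce to the cleaner inequality $\rho(at^{-1}y) \le \rho(py)$ for all $y \in A$; writing $c = at^{-1}$, the problem becomes showing $c = \lambda p$ for a suitable scalar, after which $a = ct = \lambda pt = \lambda b$ follows immediately. So without loss of generality I would work with the inequality $\rho(cx) \le \rho(px)$ for all $x$, where $p$ is a projection in the socle.

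The key structural observation is that $\rho(py) = 0$ for every $y$ with $py = 0$ on the relevant side, and more usefully that the projection $p$ controls the spectral radius: for any $y$, one has $\rho(py) = \rho(pyp)$ by the standard identity $\sigma'(py) = \sigma'(ypp) = \sigma'(pyp)$ (using $\sigma'(uv) = \sigma'(vu)$). Thus the hypothesis forces $\rho(cx) \le \rho(pxp)$ for all $x$. The first thing I would extract is that $c$ must be annihilated appropriately by $1-p$: choosing $x$ of the form $x(1-p)$ or $(1-p)x$ should make the right-hand side vanish, forcing $\rho(c \cdot (\text{such }x)) = 0$ for all $x$, and then invoking semisimplicity (via Zem\'anek's result as used in Theorem~\ref{3.1}, or directly through the trace) to conclude that $c$ lives inside $pAp$, i.e. $c = pcp$. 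This localizes the entire problem inside the corner algebra $pAp$.

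Once $c \in pAp$, the heart of the matter is that $p$ has finite rank, so $pAp$ is a finite-dimensional algebra, and the minimality hypothesis on $\mathrm{Soc}\,A$ as a two-sided ideal should force $p$ to be a \emph{minimal} projection, giving $pAp = \mathbb{C}p$. This is the crucial leverage point: if $pAp = \mathbb{C}p$ then $c = pcp \in \mathbb{C}p$, so $c = \lambda p$ for some $\lambda \in \mathbb{C}$, which is exactly what I want. I would therefore spend effort establishing that the minimal two-sided ideal hypothesis, combined with $p \in \mathrm{Soc}\,A$, collapses the corner to the scalars — this likely uses that in a semisimple algebra with $\mathrm{Soc}\,A$ minimal as a two-sided ideal, all minimal projections are ``equivalent'' and the socle is simple, so any nonzero idempotent corner that is not minimal would generate a proper subideal, contradicting minimality.

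Finally, to pin down the bound $|\lambda| \le 1$, I would feed $x = p$ itself (or an element on which $p$ acts as identity) back into the inequality: with $c = \lambda p$ we get $\rho(\lambda p \cdot p) = |\lambda|\,\rho(p) = |\lambda|$ since $\rho(p) = 1$ for a nonzero projection, while $\rho(p \cdot p) = \rho(p) = 1$, so $|\lambda| \le 1$ falls out directly. The main obstacle I anticipate is the third step: rigorously arguing that the minimality of $\mathrm{Soc}\,A$ as a two-sided ideal forces $pAp = \mathbb{C}p$ (equivalently that $p$ is minimal), since $p$ is only assumed to be a projection in the socle, not a priori minimal. If the intended route is instead to keep $p$ general and argue that $c = pcp$ must be a scalar multiple of $p$ purely from the spectral radius domination $\rho(cx) \le \rho(pxp)$ within the finite-dimensional corner, then the obstacle shifts to a linear-algebra argument showing that domination of spectral radii of one matrix family by another (both conjugate by $p$) forces proportionality — this would be the genuinely delicate point and is where I would concentrate the argument.
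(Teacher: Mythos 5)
Your reduction to $c = at^{-1}$ with $\rho(cx) \leq \rho(px)$, and the localization $c = pcp$ via the substitutions $x \mapsto (\mathbf{1}-p)x$ and $x \mapsto x(\mathbf{1}-p)$ together with semisimplicity (Jacobson's Lemma is needed for one of the two sides), match the paper exactly, as does your argument for $|\lambda| \leq 1$ at the end. The gap is in your third step, and your primary route through it is wrong: the minimality of $\mathrm{Soc}\:A$ as a two-sided ideal does \emph{not} force $p$ to be a minimal projection, and $pAp$ is \emph{not} $\mathbb{C}p$ in general. A projection of rank $n > 1$ (e.g.\ a sum of $n$ orthogonal minimal projections) generates the same two-sided ideal as a rank one projection when the socle is a minimal two-sided ideal, so no ``proper subideal'' contradiction arises; indeed, the lemma is later applied in the proof of Theorem \ref{3.5} precisely with $p = p_{1} + \cdots + p_{n}$ of rank $n$, so any proof that concludes $p$ is minimal proves something false about the intended scope of the statement.

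The correct route is the fallback you flagged but did not carry out, and it is resolved in the paper by two citations rather than a bare-hands matrix argument. First, since $pAp$ is a closed semisimple Banach algebra with identity $p$ and $\sigma_{pAp}'(pxp) = \sigma_{A}'(pxp)$, the inequality becomes $\rho_{pAp}\left((pcp)(pxp)\right) \leq \rho_{pAp}(pxp)$ for all $x$, which is exactly the $b = \mathbf{1}$ case of Problem 2; the result of \cite{scalars} then gives $pcp \in Z(pAp)$. Second, the hypothesis that $\mathrm{Soc}\:A$ is a minimal two-sided ideal is used (via \cite[Theorem 3.8, Theorem 3.9]{tracesocleident}) to conclude $pAp \cong M_{n}(\mathbb{C})$, whose center is $\mathbb{C}p$; this is where that hypothesis actually enters, not in collapsing $p$ to a minimal projection. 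Without supplying both of these ingredients --- the centrality result for spectral radius domination by the identity, and the identification of the corner as a full matrix algebra --- your proposal does not close.
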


\begin{proof}
	If $p = 0$, then by semisimplicity $a = 0$ and we are done. So assume that $p \neq 0$. By hypothesis, $\rho \left(a'x\right) \leq \rho (px)$ for all $x \in A$, where $a' = at^{-1}$. It will suffice to show that $a'=\lambda p$ for some $\lambda \in \mathbb{C}$, since of course the assumption in conjunction with the Spectral Mapping Theorem automatically yields $\left|\lambda\right| \leq 1$. Replacing $x$ by $\left(\mathbf{1}-p\right)x$, we get $\rho \left(a'\left(\mathbf{1}-p\right)x\right) = 0$ for all $x \in A$. Thus, $a'\left(\mathbf{1}-p\right) \in \mathrm{Rad}\:A = \left\{0 \right\}$, and so, $a' = a'p$. Moreover, if we replace $x$ by $x\left(\mathbf{1}-p\right)$, then by Jacobson's Lemma we have $\rho \left(\left(\mathbf{1}-p\right)a'x\right) = 0$ for all $x \in A$. Hence, as before, the semisimplicity of $A$ yields $a'=pa'$. Consequently, $a' =  pa'p$. Now, by \cite[Lemma 2.5]{spectrumpreservers} $pAp$ is a closed semisimple subalgebra of $A$ with identity $p$. Moreover, $\sigma_{pAp}'(pxp) = \sigma_{A}'(pxp)$ for all $x \in A$. Hence, by hypothesis, we have
	$$\rho_{pAp} \left(\left(pa'p\right)\left(pxp\right)\right) \leq \rho_{pAp} \left(pxp\right) \;\,\mathrm{for\;all}\;\,x \in A.$$
	Hence, by the result in \cite{scalars} it follows that $a' \in Z(pAp)$. However, since $\mathrm{Soc}\:A$ is a minimal two-sided ideal, by \cite[Theorem 3.8, Theorem 3.9]{tracesocleident} we may infer that $pAp \cong M_{n} \left(\mathbb{C}\right)$. Consequently, $Z(pAp) = \mathbb{C}p$. So, $a' = \lambda p$ for some $\lambda \in \mathbb{C}$. The result now follows. 
\end{proof}

Let $p$ be a projection of $A$ with $\mathrm{rank}\,(p) \leq 1$. By $J_{p}$ we denote the two-sided ideal generated by $p$, that is, we let
$$J_{p} := \left\{ \sum_{j=1}^{n} x_{j}py_{j} : x_{j}, y_{j} \in A, n \geq 1\;\,\mathrm{an\;integer} \right\}.$$ 
By \cite[Lemma 2.2]{commutatorsinsocle} these $J_{p}$ are minimal two-sided ideals. Moreover, by \cite[Lemma 3.5]{tracesocleident}
there exists a collection of pairwise orthogonal two-sided ideals $\left\{J_{p} : p \in \mathcal{P} \right\}$ such that every element of  $\mathrm{Soc}\:A$ can be written as a finite sum of members of the $J_{p}$. In particular, this implies that $\mathrm{Soc}\:A$ is a minimal two-sided ideal whenever $A$ is prime. 

\begin{theorem}\label{3.5}
	$\mathrm{Soc}\:A$ is a minimal two-sided ideal if and only if the following are equivalent for any $a \in A$ and $b \in \mathrm{Soc}\:A$:
	\begin{itemize}
		\item[\textnormal{(i)}]
		$\rho (ax) \leq \rho (bx)$ for all $x \in A$
		\item[\textnormal{(ii)}]
		$a = \lambda b$ for some $\lambda \in \mathbb{C}$ with $\left|\lambda\right| \leq 1$.
	\end{itemize}
\end{theorem}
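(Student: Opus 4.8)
The plan is to prove the two implications of the outer equivalence separately, using Lemma \ref{3.4} for the forward direction and the orthogonal decomposition of the socle for the converse.

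For the forward direction, assume $\mathrm{Soc}\:A$ is a minimal two-sided ideal and fix $a \in A$, $b \in \mathrm{Soc}\:A$. The implication (ii) $\Rightarrow$ (i) is immediate, since $\rho\left(\lambda b x\right) = \left|\lambda\right|\rho\left(bx\right) \leq \rho\left(bx\right)$ whenever $\left|\lambda\right| \leq 1$. For (i) $\Rightarrow$ (ii) I would first dispose of the case $b = 0$: then $\rho\left(ax\right) = 0$ for all $x$, so $a \in \mathrm{Rad}\:A = \left\{0\right\}$ and (ii) holds with $\lambda = 0$. Assuming $b \neq 0$, the idea is to reduce to the hypothesis of Lemma \ref{3.4}. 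Since $E(b)$ is dense and open while $G(A)$ is open and nonempty, I can choose $v \in E(b) \cap G(A)$, so that $vb$ is a maximal finite-rank element. Replacing $x$ by $xv$ in (i) and invoking Jacobson's Lemma gives $\rho\left(\left(va\right)x\right) \leq \rho\left(\left(vb\right)x\right)$ for all $x \in A$. The Diagonalization Theorem then writes $vb = \alpha_1 p_1 + \cdots + \alpha_n p_n$ with distinct nonzero $\alpha_i$ and orthogonal minimal projections $p_i$; setting $p = p_1 + \cdots + p_n$ and $t = vb + \left(\mathbf{1} - p\right)$, a direct check (the inverse being $\sum_i \alpha_i^{-1} p_i + \left(\mathbf{1} - p\right)$) shows $t \in G(A)$ and $vb = pt$. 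Thus $vb$ has exactly the form required by Lemma \ref{3.4}, which yields $va = \lambda\left(vb\right)$ with $\left|\lambda\right| \leq 1$, and cancelling $v$ gives $a = \lambda b$.

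For the converse I argue contrapositively: assuming $\mathrm{Soc}\:A$ is \emph{not} a minimal two-sided ideal, I would produce $a \in A$ and $b \in \mathrm{Soc}\:A$ for which (i) holds but (ii) fails. Using the decomposition $\left\{J_p : p \in \mathcal{P}\right\}$ of $\mathrm{Soc}\:A$ into pairwise orthogonal minimal two-sided ideals from \cite[Lemma 3.5]{tracesocleident}, non-minimality forces at least two members, so there are distinct rank one projections $p, q \in \mathcal{P}$ generating orthogonal ideals $J_p, J_q$. Since $J_p \cap J_q = \left\{0\right\}$, these satisfy $pq = qp = 0$. I then set $b = p + q$ and $a = p$. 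Condition (ii) fails, for $p = \lambda\left(p+q\right)$ would force $\left(1-\lambda\right)p \in J_p$ and $\lambda q \in J_q$ to both vanish, which is impossible.

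The main work, and the step I expect to be the crux, is verifying condition (i), namely $\rho\left(px\right) \leq \rho\left(\left(p+q\right)x\right)$ for all $x \in A$; this inequality is \emph{false} for a general projection dominated by $e = p+q$, so the orthogonal-ideal structure must be used essentially. The identity $\sigma'\left(pxp\right) = \sigma'\left(px\right)$ (and likewise for $e$) gives $\rho\left(px\right) = \rho\left(pxp\right)$ and $\rho\left(ex\right) = \rho\left(exe\right)$. The key structural point is that orthogonality of the ideals collapses the cross terms, $pAq = qAp = \left\{0\right\}$, so that $eAe = \mathbb{C}p \oplus \mathbb{C}q$ is commutative; hence $exe = \mu p + \nu q$ for scalars $\mu, \nu$ depending on $x$. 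Then $pxp = p\left(exe\right)p = \mu p$, so $\rho\left(px\right) = \left|\mu\right|$, while a resolvent computation gives $\rho\left(exe\right) = \max\left(\left|\mu\right|, \left|\nu\right|\right) \geq \left|\mu\right|$. This establishes (i) and completes the contrapositive. The delicate bookkeeping throughout is keeping track of which spectral radius is being used; the compatibility $\sigma'_{eAe} = \sigma'_A$ on $eAe$, of the type used in Lemma \ref{3.4} via \cite[Lemma 2.5]{spectrumpreservers}, is what lets these computations be transported back to $A$.
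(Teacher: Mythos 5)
Your proposal is correct and follows the paper's overall strategy — the forward direction reduces to Lemma \ref{3.4} via the Diagonalization Theorem and the density of $E(b)$, and the converse is the same contrapositive counterexample $a = p$, $b = p + q$ obtained from \cite[Lemma 3.5]{tracesocleident} — but both halves differ in execution. In the forward direction the paper writes $b = \alpha_{1}p_{1}u + \cdots + \alpha_{n}p_{n}u$, first shows $a = pa$, applies Lemma \ref{3.4} to the pair $\left(\left(\alpha_{1}^{-1}p_{1} + \cdots + \alpha_{n}^{-1}p_{n}\right)a,\, pu\right)$, and then multiplies back by $\alpha_{1}p_{1} + \cdots + \alpha_{n}p_{n}$; your factorization $vb = pt$ with $t = vb + \left(\mathbf{1} - p\right)$ feeds Lemma \ref{3.4} directly and eliminates that back-and-forth, a modest but genuine simplification. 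In the converse, the paper verifies condition (i) for the pair $\left(p, p+q\right)$ by noting $\left(px\right)\left(qx\right) = \left(qx\right)\left(px\right) = 0$ and invoking $\sigma'\left(\left(p+q\right)x\right) = \sigma'\left(px\right) \cup \sigma'\left(qx\right)$ from \cite[Chapter 3, Exercise 9]{aupetit1991primer}; you instead compute inside the corner $eAe = \mathbb{C}p \oplus \mathbb{C}q$, which is self-contained but leans on the minimality of $p$ and $q$ (that is, $pAp = \mathbb{C}p$), whereas the paper's identity needs only orthogonality of the ideals. One spot in your corner computation needs a line of repair: a resolvent computation yields only the inclusion $\sigma\left(exe\right) \subseteq \left\{0, \mu, \nu\right\}$, hence the upper bound $\rho\left(exe\right) \leq \max\left(\left|\mu\right|, \left|\nu\right|\right)$, while what your argument actually requires is the lower bound $\rho\left(exe\right) \geq \left|\mu\right|$. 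This is immediate, though: since $pe = p$ and $pAq = \left\{0\right\}$ one has $p\left(\mu\mathbf{1} - exe\right) = \mu p - pxp = 0$ with $p \neq 0$, so $\mu\mathbf{1} - exe$ is not invertible and $\mu \in \sigma\left(exe\right)$; alternatively, simply cite the same exercise from \cite{aupetit1991primer} that the paper uses.
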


\begin{proof}
	Suppose first that $\mathrm{Soc}\:A$ is a minimal two-sided ideal and let $a \in A$ and $b \in \mathrm{Soc}\:A$. Obviously, (ii) $\Rightarrow$ (i), so assume that condition (i) holds. If $b = 0$, then by hypothesis and the semisimplicity of $A$ we have $a = 0$. So assume $b \neq 0$. By the Diagonalization Theorem and the density of $E(b)$ we can find mutually orthogonal rank one projections $p_{1}, \ldots, p_{n}$, $\alpha_{1}, \ldots, \alpha_{n} \in \mathbb{C}-\left\{0 \right\}$ and a $u \in G(A)$ such that $b = \alpha_{1}p_{1}u + \cdots +\alpha_{n}p_{n}u$. Observe firstly that if we set $p := p_{1} + \cdots +p_{n}$, then $p^{2}=p$ and $pb = b$. Consequently, by hypothesis and Jacobson's Lemma it follows that $\rho \left(\left(\mathbf{1}-p\right)ax\right) = 0$ for all $x \in A$. Hence, $\left(\mathbf{1}-p\right)a \in \mathrm{Rad}\:A = \left\{0 \right\}$, and so, $a = pa$. By orthogonality it follows that $\left(\alpha_{1}^{-1}p_{1} + \cdots + \alpha_{n}^{-1}p_{n}\right)b = pu$. Thus, by hypothesis and Jacobson's Lemma it follows that
	$$\rho \left(\left(\alpha_{1}^{-1}p_{1} + \cdots + \alpha_{n}^{-1}p_{n}\right)ax \right) \leq \rho \left(pux\right) \;\,\mathrm{for\;all}\;\,x \in A.$$
	Thus, by Lemma \ref{3.4} we may infer that
	$$ \left(\alpha_{1}^{-1}p_{1} + \cdots + \alpha_{n}^{-1}p_{n}\right)a = \lambda pu \;\,\mathrm{for\;some}\;\,\lambda \in \mathbb{C}.$$
	Hence,
	\begin{eqnarray*}
		a \;\,=\;\, pa & = & \left(\alpha_{1}p_{1} + \cdots +\alpha_{n}p_{n}\right)\left(\alpha_{1}^{-1}p_{1} + \cdots + \alpha_{n}^{-1}p_{n}\right)a \\
		& = & \left(\alpha_{1}p_{1} + \cdots +\alpha_{n}p_{n}\right) \left(\lambda pu\right) \\
		& = & \lambda \left(\alpha_{1}p_{1}u + \cdots +\alpha_{n}p_{n}u\right) \;\,=\;\, \lambda b.
	\end{eqnarray*}
	This proves the forward implication. For the reverse implication we argue contrapositively. Suppose that $\mathrm{Soc}\:A$ is not a minimal two-sided ideal. Then by \cite[Lemma 3.5]{tracesocleident} we may infer the existence of two rank one projections, say $p$ and $q$, such that $J_{p}J_{q} = J_{q}J_{p} = \left\{0 \right\}$. In particular, $p \neq \lambda \left(p+q\right)$ for all $\lambda \in \mathbb{C}$. Let $x \in A$ be arbitrary. Then $(px)(qx) = (qx)(px) = 0$. Hence, by \cite[Chapter 3, Exercise 9]{aupetit1991primer} it follows that $\sigma' ((p+q)x) = \sigma'(px)\cup \sigma'(qx)$. So, $\rho (px) \leq \rho ((p+q)x)$. Since $x \in A$ was arbitrary, this shows that (i) $\not\Rightarrow$ (ii), which completes the proof. 
\end{proof}

\begin{lemma}\label{3.6}
	Suppose that for any $a, b \in A$ we have that the following are equivalent:
	\begin{itemize}
		\item[\textnormal{(i)}]
		$\rho (ax) \leq \rho (bx)$ for all $x \in A$
		\item[\textnormal{(ii)}]
		$a = \lambda b$ for some $\lambda \in \mathbb{C}$ with $\left|\lambda\right| \leq 1$.
	\end{itemize}
	Then $A$ is a prime algebra.
\end{lemma}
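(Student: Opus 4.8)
The plan is to prove the contrapositive: assuming $A$ is not prime, I will exhibit a single pair $a, b \in A$ for which condition (i) holds but condition (ii) fails, contradicting the assumed equivalence. Note that the implication (ii) $\Rightarrow$ (i) is automatic, since $a = \lambda b$ with $\left|\lambda\right| \leq 1$ gives $\rho (ax) = \left|\lambda\right|\rho (bx) \leq \rho (bx)$ by homogeneity of the spectral radius; so only (i) $\Rightarrow$ (ii) can fail, and it is precisely such a failure that I would manufacture. By the definition of primeness used in this paper, if $A$ is not prime then there exist nonzero two-sided ideals $I$ and $J$ of $A$ with $IJ = \left\{0 \right\}$.

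The key preliminary step is to \emph{symmetrize} this annihilation, and I expect this to be the only genuinely delicate point. Since $(JI)^{2} = J\left(IJ\right)I = \left\{0 \right\}$, the two-sided ideal $JI$ is nilpotent and hence contained in $\mathrm{Rad}\:A = \left\{0 \right\}$; therefore $JI = \left\{0 \right\}$ as well. Applying the same reasoning to $I \cap J$ yields $\left(I \cap J\right)^{2} \subseteq IJ = \left\{0 \right\}$, whence $I \cap J = \left\{0 \right\}$. Without this symmetrization one would obtain from $IJ = \left\{0 \right\}$ only a one-sided relation of the form $(ax)(cx) = 0$, which is insufficient to control the spectrum of a sum; the two-sidedness is what permits invoking the spectral-union result.

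With $IJ = JI = \left\{0 \right\}$ and $I \cap J = \left\{0 \right\}$ in hand, I would fix nonzero elements $a \in I$ and $c \in J$. These are automatically linearly independent, for $I \cap J = \left\{0 \right\}$ forces any nontrivial relation $\alpha a + \beta c = 0$ to collapse. Moreover, because $I$ and $J$ are two-sided ideals, $aAc \subseteq IAJ \subseteq IJ = \left\{0 \right\}$ and $cAa \subseteq JAI \subseteq JI = \left\{0 \right\}$. Setting $b := a + c$, for every $x \in A$ one then has $(ax)(cx) = \left(axc\right)x = 0$ and $(cx)(ax) = \left(cxa\right)x = 0$, so by \cite[Chapter 3, Exercise 9]{aupetit1991primer} it follows that $\sigma' \left(\left(a+c\right)x\right) = \sigma'(ax) \cup \sigma'(cx)$. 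In particular $\sigma'(ax) \subseteq \sigma'(bx)$, and therefore $\rho (ax) \leq \rho (bx)$ for all $x \in A$ (the case $\sigma'(ax) = \emptyset$ being trivial). Thus condition (i) holds for the pair $(a, b)$.

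Finally I would check that condition (ii) fails for this pair: an identity $a = \lambda\left(a+c\right)$ would give $\left(1-\lambda\right)a = \lambda c$, which by the linear independence of $a$ and $c$ would require simultaneously $\lambda = 0$ and $1 - \lambda = 0$, an impossibility. Hence $a \neq \lambda b$ for every $\lambda \in \mathbb{C}$, so (i) $\not\Rightarrow$ (ii) for the pair $(a, b)$. This contradicts the standing hypothesis that (i) and (ii) are equivalent for all $a, b \in A$, and we conclude that $A$ must be prime.
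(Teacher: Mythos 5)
Your proof is correct and follows essentially the same route as the paper's: argue contrapositively, take $0 \neq a \in I$ and $0 \neq c \in J$ with $IJ = \left\{0\right\}$, symmetrize the annihilation, and apply \cite[Chapter 3, Exercise 9]{aupetit1991primer} to the pair $\left(a, a+c\right)$ to get $\rho(ax) \leq \rho\left(\left(a+c\right)x\right)$ while $a \neq \lambda\left(a+c\right)$. The only difference is cosmetic: you obtain $JI = \left\{0\right\}$ and $I \cap J = \left\{0\right\}$ by noting these ideals are nilpotent and hence contained in $\mathrm{Rad}\:A = \left\{0\right\}$, whereas the paper derives $I \subseteq r(J)$ via Jacobson's Lemma and rules out $a \in J$ directly; both reductions rest on semisimplicity.
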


\begin{proof}
	We shall argue contrapositively. If $A$ is not prime, then we can find two nonzero two-sided ideals $I$ and $J$ such that $IJ = \left\{0 \right\}$. Let $0 \neq a \in I$. If $a \in J$, then $aAa = \left\{0 \right\}$. But then, by semisimplicity, it follows that $a = 0$ which is absurd. Hence, $a \notin J$. Pick $0 \neq b \in J$. In particular then, $a \neq \lambda b$ for all $\lambda \in \mathbb{C}$. We firstly claim that $I \subseteq r(J)$. Let $x \in l(J)$ and let $y \in J$ be arbitrary. By Jacobson's Lemma and the fact that $J$ is a two-sided ideal, it follows that $\rho (yxw) = 0$ for all $w \in A$. Hence, $yx \in \mathrm{Rad}\:A = \left\{0 \right\}$. Since $y \in J$ was arbitrary, it follows that $I \subseteq r(J)$ as claimed. Since $a \neq \lambda b$ for all $\lambda \in \mathbb{C}$ and $b \neq 0$, we may infer that $a \neq \lambda\left(b + a\right)$ for all $\lambda \in \mathbb{C}$. Let $x \in A$ be arbitrary. Then $ax \in l(J) \cap r(J)$. Consequently, $(ax)(bx) = (bx)(ax) = 0$. Thus, by \cite[Chapter 3, Exercise 9]{aupetit1991primer} it follows that $\sigma'((a+b)x) = \sigma'(ax) \cup \sigma'(bx)$. Hence, $\rho (ax) \leq \rho ((a+b)x)$. Since $x \in A$ was arbitrary, this gives the result.
\end{proof}

\begin{theorem}\label{3.7}
	Let $A$ be a $C^{\ast}$-algebra. Then $A$ is prime if and only if for any $a, b \in A$ we have that the following are equivalent:
	\begin{itemize}
		\item[\textnormal{(i)}]
		$\rho (ax) \leq \rho (bx)$ for all $x \in A$
		\item[\textnormal{(ii)}]
		$a = \lambda b$ for some $\lambda \in \mathbb{C}$ with $\left|\lambda\right| \leq 1$.
	\end{itemize}
\end{theorem}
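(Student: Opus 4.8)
The plan is to treat the biconditional one implication at a time. One direction requires no new work: if conditions (i) and (ii) are equivalent for all $a, b \in A$, then $A$ is prime by Lemma \ref{3.6}, and since that lemma is proved for an arbitrary semisimple Banach algebra and makes no use of the involution, it applies here unchanged. Thus the substance of the theorem is the converse, that primeness of the $C^{\ast}$-algebra $A$ forces (i) and (ii) to be equivalent for every pair $a, b \in A$. Of the two halves of that equivalence, (ii) $\Rightarrow$ (i) is immediate: if $a = \lambda b$ with $\left|\lambda\right| \leq 1$, then by the Spectral Mapping Theorem $\rho (ax) = \left|\lambda\right| \rho (bx) \leq \rho (bx)$ for all $x \in A$. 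So the entire burden falls on establishing (i) $\Rightarrow$ (ii).

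For (i) $\Rightarrow$ (ii) I would invoke the theorem of Bre\v{s}ar and \v{S}penko \cite{detelements} for prime $C^{\ast}$-algebras, which guarantees that any $a, b$ satisfying (\ref{eq1.1}) are necessarily linearly dependent; the remaining work is then to sharpen this into the stated conclusion and to clear away the degenerate cases. I would distinguish according to whether $b = 0$. If $b = 0$, then (i) reads $\rho (ax) \leq 0$, so $ax$ is quasinilpotent for every $x \in A$; by the standard characterization of the radical this places $a \in \mathrm{Rad}\,A$, whence $a = 0$ by semisimplicity, and (ii) holds trivially with $\lambda = 0$. If $b \neq 0$, then linear dependence of $\left\{a, b\right\}$ furnishes scalars $\alpha, \beta$, not both zero, with $\alpha a + \beta b = 0$; since $b \neq 0$ forces $\alpha \neq 0$, we obtain $a = \lambda b$ for a (unique) $\lambda \in \mathbb{C}$. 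Finally, because $b \notin \mathrm{Rad}\,A = \left\{0\right\}$ there is some $x_{0} \in A$ with $\rho (bx_{0}) > 0$; substituting $a = \lambda b$ into (i) and using $\rho (\lambda bx_{0}) = \left|\lambda\right| \rho (bx_{0})$ gives $\left|\lambda\right| \rho (bx_{0}) \leq \rho (bx_{0})$, and dividing through by $\rho (bx_{0})$ yields $\left|\lambda\right| \leq 1$, which is exactly (ii).

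I expect the genuinely hard step to be the linear dependence itself, which is precisely where the $C^{\ast}$-structure is indispensable and which I would simply quote from \cite{detelements} rather than reprove; everything downstream of it—the radical argument for $b = 0$ and the homogeneity estimate pinning $\left|\lambda\right| \leq 1$—is routine. It is worth emphasizing that this rigidity cannot be expected in a general prime semisimple Banach algebra, as \cite[Example 3.3]{detelements} indicates, so the appeal to the $C^{\ast}$-hypothesis at this one point is essential and not a convenience.
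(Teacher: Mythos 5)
Your proposal is correct and takes essentially the same route as the paper, whose proof is simply the observation that the statement is immediate from \cite[Theorem 3.7]{detelements} together with Lemma \ref{3.6}. The additional details you supply (the degenerate case $b=0$ via the spectral characterization of the radical, the extraction of $a=\lambda b$ from linear dependence, and the homogeneity argument giving $\left|\lambda\right|\leq 1$) are precisely the steps the paper leaves implicit in the word ``immediate.''
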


\begin{proof}
	This is immediate from \cite[Theorem 3.7]{detelements} and Lemma \ref{3.6}.
\end{proof}

\begin{lemma}\label{3.8}
	Suppose that $\mathrm{Soc}\:A$ is a minimal two-sided ideal and that $l\left(\mathrm{Soc}\:A\right) = \left\{0 \right\}$. Then for any $a, b \in A$ we have that the following are equivalent:
	\begin{itemize}
		\item[\textnormal{(i)}]
		$\rho (ax) \leq \rho (bx)$ for all $x \in A$
		\item[\textnormal{(ii)}]
		$a = \lambda b$ for some $\lambda \in \mathbb{C}$ with $\left|\lambda\right| \leq 1$.
	\end{itemize}
\end{lemma}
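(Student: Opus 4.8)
The plan is to establish the nontrivial implication (i) $\Rightarrow$ (ii), the reverse being immediate since $a = \lambda b$ with $\left|\lambda\right| \leq 1$ gives $\rho (ax) = \left|\lambda\right|\rho (bx) \leq \rho (bx)$. First I would dispose of the case $b = 0$: then $\rho (ax) \leq \rho (0) = 0$ for all $x$, so $ax$ is quasinilpotent for every $x$, forcing $a \in \mathrm{Rad}\:A = \left\{0\right\}$, and (ii) holds with $\lambda = 0$. I also note that the hypotheses force $\mathrm{Soc}\:A \neq \left\{0\right\}$ (otherwise $l\left(\mathrm{Soc}\:A\right) = A$), so throughout I assume $b \neq 0$.

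The key reduction is to feed socle elements into Theorem \ref{3.5}. Fixing $c \in \mathrm{Soc}\:A$ and replacing $x$ by $cx$ in (i) yields $\rho \left(\left(ac\right)x\right) \leq \rho \left(\left(bc\right)x\right)$ for all $x$; since $bc \in \mathrm{Soc}\:A$ and $ac \in A$, Theorem \ref{3.5} produces a scalar $\lambda_{c}$ with $\left|\lambda_{c}\right| \leq 1$ and $ac = \lambda_{c}\,bc$. Simultaneously, Jacobson's Lemma turns (i) into $\rho \left(xa\right) \leq \rho \left(xb\right)$ for all $x$; replacing $x$ by $xc$ and invoking Jacobson's Lemma and Theorem \ref{3.5} once more gives $\mu_{c}$ with $ca = \mu_{c}\,cb$. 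When $bc = 0$ the first estimate forces $\rho \left(\left(ac\right)x\right) = 0$ for all $x$, hence $ac = 0$, so $ac = \lambda bc$ holds trivially for every $\lambda$; the same remark applies on the right.

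The crucial identity linking the two families of scalars comes from associativity: for any $c, c' \in \mathrm{Soc}\:A$ we have $c'\left(ac\right) = \left(c'a\right)c$, that is, $\lambda_{c}\,c'bc = \mu_{c'}\,c'bc$, whence $\left(\lambda_{c} - \mu_{c'}\right)c'bc = 0$. Thus $\lambda_{c} = \mu_{c'}$ whenever $c'bc \neq 0$. To exploit this I would record the structural fact that $\left(\mathrm{Soc}\:A\right)^{2} = \mathrm{Soc}\:A$ (by minimality and semisimplicity, since a square-zero two-sided ideal lies in $\mathrm{Rad}\:A = \left\{0\right\}$); this forces $\mathrm{Soc}\:A\cdot w \neq \left\{0\right\}$ for every nonzero $w \in \mathrm{Soc}\:A$, so the left annihilator in $\mathrm{Soc}\:A$ of a nonzero socle element is a \emph{proper} subspace.

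The main obstacle, and the heart of the argument, is to show that $\lambda_{c}$ is independent of $c$. Given $c_{1}, c_{2}$ with $bc_{1}, bc_{2} \neq 0$, the sets $\left\{c' \in \mathrm{Soc}\:A : c'bc_{i} = 0\right\}$ are proper subspaces by the previous step, and a vector space is never the union of two proper subspaces; hence some $c'$ satisfies $c'bc_{1} \neq 0$ and $c'bc_{2} \neq 0$ simultaneously, giving $\lambda_{c_{1}} = \mu_{c'} = \lambda_{c_{2}}$. Writing $\lambda$ for this common value, we have $\left|\lambda\right| \leq 1$. Combining this with the case $bc = 0$ (where $ac = 0$), we obtain $ac = \lambda bc$, i.e. $\left(a - \lambda b\right)c = 0$, for every $c \in \mathrm{Soc}\:A$. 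Therefore $a - \lambda b \in l\left(\mathrm{Soc}\:A\right) = \left\{0\right\}$, which yields $a = \lambda b$ with $\left|\lambda\right| \leq 1$, as required. I expect the independence of $\lambda_{c}$ from $c$ to be the only delicate point; once the associativity identity and the two-subspace observation are in place, the conclusion follows mechanically from the triviality of the left annihilator.
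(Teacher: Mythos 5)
Your proof is correct, but the way you glue the scalars together is genuinely different from the paper's. Both arguments begin identically: substitute $yx$ for $x$ and invoke Theorem \ref{3.5} to get $ay=\lambda_{y}by$ for each $y\in\mathrm{Soc}\:A$. The paper then passes immediately to the trace: the functionals $f_{a}(y)=\mathrm{Tr}\,(ay)$ and $f_{b}(y)=\mathrm{Tr}\,(by)$ satisfy $\mathrm{Ker}\:f_{b}\subseteq\mathrm{Ker}\:f_{a}$ (because $ay=\lambda_{y}by$ forces $\mathrm{Tr}\,(ay)=\lambda_{y}\mathrm{Tr}\,(by)$), so $f_{a}=\lambda f_{b}$ by elementary linear algebra, and $a-\lambda b\in l\left(\mathrm{Soc}\:A\right)=\left\{0\right\}$ then follows from \cite[Corollary 3.6]{aupetitmoutontrace}. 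You avoid the trace entirely: you also derive the right-sided relations $ca=\mu_{c}cb$, use associativity to obtain $\left(\lambda_{c}-\mu_{c'}\right)c'bc=0$, and use the fact that a vector space is never the union of two proper subspaces to force all the $\lambda_{c}$ to coincide; the conclusion $\left(a-\lambda b\right)\mathrm{Soc}\:A=\left\{0\right\}$ is then immediate without any trace machinery. Your route is more elementary in that it bypasses the spectral trace and \cite[Corollary 3.6]{aupetitmoutontrace}, at the cost of extra two-sided bookkeeping; the paper's is shorter once the trace apparatus is available. One step of yours deserves tightening: $\left(\mathrm{Soc}\:A\right)^{2}=\mathrm{Soc}\:A$ does not by itself yield $\mathrm{Soc}\:A\cdot w\neq\left\{0\right\}$ for every nonzero $w\in\mathrm{Soc}\:A$. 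The claim is nevertheless true --- either apply minimality to the two-sided ideal $r\left(\mathrm{Soc}\:A\right)\cap\mathrm{Soc}\:A$ (which cannot be all of $\mathrm{Soc}\:A$ since $\left(\mathrm{Soc}\:A\right)^{2}\neq\left\{0\right\}$), or, more simply, use Von Neumann regularity: $w=wxw$ gives $\left(wx\right)w=w\neq 0$ with $wx\in\mathrm{Soc}\:A$. With that repair everything checks out, including the degenerate cases and the bound $\left|\lambda\right|\leq 1$ inherited from Theorem \ref{3.5}.
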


\begin{proof}
	Let $a, b \in A$. If $a = 0$, then we are done. So assume $a \neq 0$. It suffices to show that (i) $\Rightarrow$ (ii). Let $y \in \mathrm{Soc}\:A$ be arbitrary but fixed. By hypothesis, $\rho (ayx) \leq \rho (byx)$ for all $x \in A$. Hence, by Theorem \ref{3.5} there exists a $\lambda_{y} \in \mathbb{C}$ such that $ay = \lambda_{y}by$. Let $f_{b}: \mathrm{Soc}\:A \rightarrow \mathbb{C}$ and $f_{a}: \mathrm{Soc}\:A \rightarrow \mathbb{C}$ be defined as follows: $f_{b}(y) = \mathrm{Tr}\,(by)$ and $f_{a}(y) = \mathrm{Tr}\,(ay)$ for $y \in \mathrm{Soc}\:A$. Then $f_{b}$ and $f_{a}$ are nonzero linear functionals on the linear space $\mathrm{Soc}\:A$. Moreover, by our first observation it follows that $\mathrm{Ker}\:f_{b} \subseteq \mathrm{Ker}\:f_{a}$. Hence, from linear algebra (see \cite[p. 10]{diestel1984sequences}), it follows that $f_{a} = \lambda f_{b}$ for some $\lambda \in \mathbb{C}$. Thus, by the linearity of the trace, $\mathrm{Tr}\,\left(\left(a-\lambda b\right)y\right) = 0$ for all $y \in \mathrm{Soc}\:A$. Hence, by \cite[Corollary 3.6]{aupetitmoutontrace} it follows that $a - \lambda b \in l\left(\mathrm{Soc}\:A\right) = \left\{0 \right\}$ which gives the result. 
\end{proof}

\begin{theorem}\label{3.9}
	Suppose that $\mathrm{Soc}\:A \neq \left\{0 \right\}$. Then $A$ is prime if and only if for any $a, b \in A$ we have that the following are equivalent:
	\begin{itemize}
		\item[\textnormal{(i)}]
		$\rho (ax) \leq \rho (bx)$ for all $x \in A$
		\item[\textnormal{(ii)}]
		$a = \lambda b$ for some $\lambda \in \mathbb{C}$ with $\left|\lambda\right| \leq 1$.
	\end{itemize}
\end{theorem}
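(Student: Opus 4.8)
The plan is to deduce both implications directly from the lemmas already established, so that the proof amounts to assembling those pieces together with one structural observation about prime algebras.

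For the reverse implication there is essentially nothing new to do: the hypothesis that (i) and (ii) are equivalent for all $a, b \in A$ is exactly the hypothesis of Lemma \ref{3.6}, and that lemma concludes that $A$ is prime.

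For the forward implication, suppose $A$ is prime and $\mathrm{Soc}\:A \neq \{0\}$. My strategy is to verify the two hypotheses of Lemma \ref{3.8}---namely that $\mathrm{Soc}\:A$ is a minimal two-sided ideal and that $l(\mathrm{Soc}\:A) = \{0\}$---and then apply that lemma. The first hypothesis is immediate from the discussion preceding Theorem \ref{3.5}, where it is noted that primeness forces $\mathrm{Soc}\:A$ to be a minimal two-sided ideal. For the second, I would first check that $l(\mathrm{Soc}\:A)$ is itself a two-sided ideal of $A$: if $x\,\mathrm{Soc}\:A = \{0\}$, then for every $c \in A$ we have $(cx)\,\mathrm{Soc}\:A = c(x\,\mathrm{Soc}\:A) = \{0\}$ and $(xc)\,\mathrm{Soc}\:A \subseteq x\,\mathrm{Soc}\:A = \{0\}$, the latter inclusion because $\mathrm{Soc}\:A$ is a two-sided ideal. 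Since $l(\mathrm{Soc}\:A)\,\mathrm{Soc}\:A = \{0\}$ by definition and $\mathrm{Soc}\:A$ is a nonzero two-sided ideal, primeness forces the nonzero-product condition to fail unless $l(\mathrm{Soc}\:A) = \{0\}$; hence $l(\mathrm{Soc}\:A) = \{0\}$. With both hypotheses in place, Lemma \ref{3.8} yields the equivalence of (i) and (ii).

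The argument carries little technical difficulty; its content lies entirely in recognizing that the two substantive lemmas (Lemma \ref{3.6} and Lemma \ref{3.8}) dovetail once one knows that a prime algebra with nonzero socle satisfies the annihilator condition $l(\mathrm{Soc}\:A) = \{0\}$. I therefore expect no genuine obstacle: the only nontrivial step is the short verification that primeness kills the left annihilator of the socle, and this is a standard consequence of the definition of a prime algebra applied to the two-sided ideals $l(\mathrm{Soc}\:A)$ and $\mathrm{Soc}\:A$.
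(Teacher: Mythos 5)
Your proposal is correct and follows essentially the same route as the paper: the reverse implication via Lemma \ref{3.6}, and the forward implication by checking that primeness together with $\mathrm{Soc}\:A \neq \{0\}$ gives both a minimal socle and $l\left(\mathrm{Soc}\:A\right) = \{0\}$, then invoking Lemma \ref{3.8}. The only difference is that you spell out the (correct) verification that $l\left(\mathrm{Soc}\:A\right)$ is a two-sided ideal annihilating the socle, a step the paper leaves implicit.
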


\begin{proof}
	The reverse implication follows immediately from Lemma \ref{3.6}. So assume that $A$ is prime. Since $\mathrm{Soc}\:A \neq \left\{0 \right\}$, we may infer that $l\left(\mathrm{Soc}\:A\right) = \left\{0 \right\}$. Moreover, since $A$ is prime, it readily follows from the remark preceding Theorem \ref{3.5} that $\mathrm{Soc}\:A$ is a minimal two-sided ideal. The forward implication therefore follows from Lemma \ref{3.8}.
\end{proof}

\begin{corollary}\label{3.10}
	Suppose that $\mathrm{Soc}\:A \neq \left\{0 \right\}$. Then $A$ is prime if and only if $\mathrm{Soc}\:A$ is a minimal two-sided ideal and $l\left(\mathrm{Soc}\:A\right) = \left\{0 \right\}$.
\end{corollary}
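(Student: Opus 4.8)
The plan is to obtain both implications by assembling the results already established, since the statement is essentially a repackaging of Theorem \ref{3.9} together with Lemmas \ref{3.6} and \ref{3.8}.

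For the forward implication I would assume $A$ is prime and produce the two conclusions separately. That $\mathrm{Soc}\:A$ is a minimal two-sided ideal is immediate from the remark preceding Theorem \ref{3.5}, which records exactly this consequence of primeness via the decomposition of $\mathrm{Soc}\:A$ into the pairwise orthogonal ideals $J_{p}$. For the annihilator condition I would first note that $l(\mathrm{Soc}\:A)$ is a two-sided ideal: closure under left multiplication is clear from the definition, while closure under right multiplication uses that $\mathrm{Soc}\:A$ is a left ideal, so that for $x \in l(\mathrm{Soc}\:A)$ and $r \in A$ one has $(xr)\mathrm{Soc}\:A = x(r\,\mathrm{Soc}\:A) \subseteq x\,\mathrm{Soc}\:A = \{0\}$. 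Since by definition $l(\mathrm{Soc}\:A)\,\mathrm{Soc}\:A = \{0\}$ and $\mathrm{Soc}\:A \neq \{0\}$ by hypothesis, primeness forces $l(\mathrm{Soc}\:A) = \{0\}$. This is precisely the step already invoked inside the proof of Theorem \ref{3.9}.

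For the reverse implication I would simply chain the two lemmas. Assuming $\mathrm{Soc}\:A$ is a minimal two-sided ideal and $l(\mathrm{Soc}\:A) = \{0\}$, Lemma \ref{3.8} applies verbatim and yields that conditions (i) and (ii) are equivalent for every pair $a, b \in A$. Feeding this equivalence into the hypothesis of Lemma \ref{3.6} then gives that $A$ is prime, which completes the argument.

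I do not expect a genuine obstacle here, as every ingredient is in place; the one point requiring a little care is verifying that $l(\mathrm{Soc}\:A)$ is a two-sided (and not merely a right) ideal, since this is exactly what allows primeness to be invoked against the product $l(\mathrm{Soc}\:A)\,\mathrm{Soc}\:A = \{0\}$ in the forward direction.
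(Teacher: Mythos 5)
Your argument is correct and is essentially the paper's own proof unpacked: the paper simply cites Lemma \ref{3.8} and Theorem \ref{3.9}, and your forward direction reproduces the steps already carried out inside the proof of Theorem \ref{3.9} (primeness gives minimality of $\mathrm{Soc}\:A$ via the remark preceding Theorem \ref{3.5}, and kills $l\left(\mathrm{Soc}\:A\right)$ since it is a two-sided ideal annihilating the nonzero ideal $\mathrm{Soc}\:A$), while your reverse direction chains Lemma \ref{3.8} with Lemma \ref{3.6} exactly as intended. The extra care you take in verifying that $l\left(\mathrm{Soc}\:A\right)$ is a two-sided ideal is a worthwhile detail the paper leaves implicit.
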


\begin{proof}
	This is a direct consequence of Lemma \ref{3.8} and Theorem \ref{3.9}.
\end{proof}

Let $0 \neq a \in A$ and $0 \neq b \in \mathrm{Soc}\:A$. It turns out that the condition
$$\sigma' (ax) \subseteq \sigma' (bx)\;\,\mathrm{for\;all}\;\,x\in A \Rightarrow a = b$$
can also be used to characterize socles which are minimal two-sided ideals. Firstly, however, we will prove some related results.\\

The next result was obtained by G. Braatvedt and R. Brits in \cite{uniqueness}. We state it together with a short new proof based on the spectral trace:

\begin{theorem}\cite[Corollary 2.3]{uniqueness}\label{3.a}
	Let $N$ be an arbitrary nonempty open subset of $A$ and let $a, b \in A$. If $\sigma (ax)$ and $\sigma (bx)$ are finite and equal for all $x \in N$, then $a = b$.
\end{theorem}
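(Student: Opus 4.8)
The plan is to reduce everything to the spectral trace. First I would argue that the finiteness hypothesis already forces $a, b \in \mathrm{Soc}\:A$: fixing $x_{0} \in N$ and a direction $y \in A$ and looking at the entire function $\lambda \mapsto a(x_{0} + \lambda y)$, openness of $N$ makes $\sigma(a(x_{0} + \lambda y))$ finite on a disk about $\lambda = 0$, so the Scarcity Theorem \cite[Theorem 3.4.25]{aupetit1991primer} applies along this line; letting $y$ vary shows $\sigma(ax)$ is finite for every $x \in A$, and the scarcity machinery then bounds $\#\sigma'(ax)$ uniformly in $x$, i.e. $\mathrm{rank}\,(a) < \infty$. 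The same reasoning applies to $b$, so both elements lie in $\mathrm{Soc}\:A$ and the trace becomes available.

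Once $a, b \in \mathrm{Soc}\:A$, the crux is to promote, for each fixed $x_{0} \in N$, the set equality $\sigma(ax_{0}) = \sigma(bx_{0})$ to the numerical equality $\mathrm{Tr}\,(ax_{0}) = \mathrm{Tr}\,(bx_{0})$; for this I would show the multiplicities agree, that is, $m(\mu, ax_{0}) = m(\mu, bx_{0})$ for all $\mu \neq 0$. Choosing $r > 0$ with $B(\mu, 2r)$ isolating $\mu$ in the common spectrum $\sigma(ax_{0}) = \sigma(bx_{0})$ and $0 \notin B(\mu, r)$, the definition of multiplicity \cite[Theorem 2.4]{aupetitmoutontrace} furnishes balls $U, U'$ about $\mathbf{1}$ with $\#[\sigma(x\,ax_{0}) \cap B(\mu, r)] = m(\mu, ax_{0})$ on $E(ax_{0}) \cap U$ and $\#[\sigma(x\,bx_{0}) \cap B(\mu, r)] = m(\mu, bx_{0})$ on $E(bx_{0}) \cap U'$. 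By Jacobson's Lemma $\sigma'(x\,ax_{0}) = \sigma'(a(x_{0}x))$ and $\sigma'(x\,bx_{0}) = \sigma'(b(x_{0}x))$, and for $x$ near $\mathbf{1}$ we have $x_{0}x \in N$, whence $\sigma(a(x_{0}x)) = \sigma(b(x_{0}x))$. Since $E(ax_{0})$ and $E(bx_{0})$ are dense open \cite[Theorem 2.2]{aupetitmoutontrace}, I may select a single $x \in E(ax_{0}) \cap E(bx_{0}) \cap U \cap U'$ close enough to $\mathbf{1}$ that $x_{0}x \in N$; for this $x$ the four counts coincide, giving the multiplicity identity, and therefore
$$\mathrm{Tr}\,(ax_{0}) = \sum_{\mu \neq 0} \mu\, m(\mu, ax_{0}) = \sum_{\mu \neq 0} \mu\, m(\mu, bx_{0}) = \mathrm{Tr}\,(bx_{0}).$$

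It then follows that $\mathrm{Tr}\,((a - b)x) = 0$ for every $x \in N$. Since $a - b \in \mathrm{Soc}\:A$, the map $x \mapsto \mathrm{Tr}\,((a - b)x)$ is a linear functional on all of $A$, and a linear functional vanishing on the nonempty open set $N$ must vanish identically; thus $\mathrm{Tr}\,((a - b)x) = 0$ for all $x \in \mathrm{Soc}\:A$, and \cite[Corollary 3.6]{aupetitmoutontrace} yields $a = b$. I expect the main obstacle to be the very first step: extracting finite rank from the merely pointwise finiteness of the spectra on $N$ is where the analytic scarcity apparatus is genuinely needed, in particular the passage from finiteness along complex lines to a uniform bound on $\#\sigma'(ax)$. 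The multiplicity-matching step is the heart of the trace-based argument but, once the perturbation definition of multiplicity is unwound, it reduces to the bookkeeping with Jacobson's Lemma and the density of $E(\cdot)$ described above.
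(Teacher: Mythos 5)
Your argument is correct, but it reaches the key identity $\mathrm{Tr}\,(ax)=\mathrm{Tr}\,(bx)$ by a genuinely different mechanism from the paper. The paper never touches multiplicities: it fixes a single $x_{0}\in N\cap E(a)\cap E(b)$, so that $ax_{0}$ and $bx_{0}$ are \emph{maximal} finite-rank elements and, by the Diagonalization Theorem, their traces are simply the sums of their common nonzero spectral values; it then considers, for arbitrary $y$, the entire maps $\lambda\mapsto\mathrm{Tr}\,\bigl(a\left[(1-\lambda)x_{0}+\lambda y\right]\bigr)$ and $\lambda\mapsto\mathrm{Tr}\,\bigl(b\left[(1-\lambda)x_{0}+\lambda y\right]\bigr)$, which agree on a small disk about $0$ and hence, by the holomorphy of the trace \cite[Theorem 3.1]{aupetitmoutontrace} and the identity theorem, at $\lambda=1$ as well, giving $\mathrm{Tr}\,(ay)=\mathrm{Tr}\,(by)$ for every $y\in A$. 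You instead match the multiplicities $m(\mu,ax_{0})$ and $m(\mu,bx_{0})$ at \emph{every} point $x_{0}$ of $N$ by unwinding their perturbation definition via Jacobson's Lemma and the density of $E(ax_{0})\cap E(bx_{0})$, and you replace analytic continuation by the elementary observation that a linear functional vanishing on a nonempty open set vanishes identically; both of these steps check out. Your route dispenses with the Diagonalization Theorem and with the holomorphy of the trace, at the price of direct multiplicity bookkeeping; the paper buys freedom from multiplicities by invoking the deeper analytic structure. The one place where you are no more complete than the paper is the opening reduction: pointwise finiteness of $\sigma(ax)$ on $N$ propagates to all of $A$ by scarcity along complex lines, but the uniform bound $\sup_{x}\#\sigma'(ax)<\infty$ additionally requires the Baire category argument the paper at least names; your phrase about ``the scarcity machinery'' is where that is hiding, and you rightly flag it as the genuine obstacle.
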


\begin{proof}
	Let $y \in A$. A standard argument using Baire's Category Theorem and the Scarcity Theorem can be used to show that if $\sigma (yx)$ is finite for all $x$ in some nonempty open set $N$ of $A$, then $y$ has finite rank. We may therefore infer that both $a$ and $b$ have finite rank. Furthermore, since $E(a)$ and $E(b)$ are both open dense subsets of $A$, it readily follows that $E(a) \cap E(b)$ is a dense subset of $A$. Consequently, we can find an $x_{0} \in N$ such that $ax_{0}$ and $bx_{0}$ are both maximal finite-rank elements. Let $y \in A$ be arbitrary but fixed, and define analytic functions from $\mathbb{C}$ into $\mathrm{Soc}\:A$ as follows:
	$$f(\lambda) = a\left[\left(1-\lambda\right)x_{0}+\lambda y\right] \;\,\mathrm{and}\;\,g(\lambda) = b\left[\left(1-\lambda\right)x_{0}+\lambda y\right]\;\,\left(\lambda \in \mathbb{C}\right).$$
	Since $\left(E(a) \cap E(b)\right) \cap N$ is a nonempty open set and $x_{0}$ belongs to this set, there exists a real number $\epsilon > 0$ such that for all $\lambda \in B\left(0, \epsilon\right)$ we have that $f\left(\lambda\right)$ and $g\left(\lambda\right)$ are maximal finite-rank elements and $\sigma \left(f\left(\lambda\right)\right) = \sigma \left(g\left(\lambda\right)\right)$. By the Diagonalization Theorem the functions
	$$\lambda \mapsto \mathrm{Tr}\,\left(f\left(\lambda\right)\right) \;\,\mathrm{and}\;\,\lambda \mapsto \mathrm{Tr}\,\left(g\left(\lambda\right)\right)$$
	agree on $B\left(0, \epsilon\right)$. Thus, since these functions are entire by \cite[Theorem 3.1]{aupetitmoutontrace}, it must be the case that they agree on all of $\mathbb{C}$. With the particular value $\lambda = 1$ we get $\mathrm{Tr}\,(ay) = \mathrm{Tr}\,(by)$. Since $y \in A$ was arbitrary we conclude by \cite[Corollary 3.6]{aupetitmoutontrace} that $a = b$. 
\end{proof}

From Theorem \ref{3.a} it is clear that if $\sigma (ax)$ and $\sigma (bx)$ are finite and equal for all $x$ in some nonempty open set $N$, then $\sigma (ax)=\sigma (bx)$ for all $x \in A$. In fact, we have the following result:

\begin{lemma}\label{3.f}
	Let $N$ be an arbitrary nonempty open subset of $A$ and let $a, b \in A$. If $\sigma (bx)$ is finite and $\sigma' (ax) \subseteq \sigma' (bx)$ for all $x \in N$, then $\sigma' (ax) \subseteq \sigma' (bx)$ for all $x \in A$. 
\end{lemma}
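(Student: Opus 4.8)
The plan is to reduce to the socle and then transport the spectral inclusion along an analytic path by encoding it as the simultaneous vanishing of finitely many entire functions, to which the identity theorem applies. First I would observe that both $a$ and $b$ are finite-rank: $\sigma(bx)$ is finite for all $x\in N$, so the Baire--Scarcity argument recalled in the proof of Theorem \ref{3.a} forces $b\in\mathrm{Soc}\:A$; since $\sigma'(ax)\subseteq\sigma'(bx)$ gives $\sigma(ax)\subseteq\{0\}\cup\sigma'(bx)$ finite for $x\in N$, the same argument yields $a\in\mathrm{Soc}\:A$. Put $r:=\mathrm{rank}\,(a)$ and $s:=\mathrm{rank}\,(b)$. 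Fix an arbitrary $x_{1}\in A$ and a point $x_{0}\in N$, and define the entire $A$-valued path $z(\lambda)=(1-\lambda)x_{0}+\lambda x_{1}$ together with
$$f(\lambda)=a\,z(\lambda),\qquad g(\lambda)=b\,z(\lambda)\qquad(\lambda\in\mathbb{C}),$$
which are analytic functions into $\mathrm{Soc}\:A$ (the socle being an ideal). It suffices to prove $\sigma'(f(1))\subseteq\sigma'(g(1))$.

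Next I would attach to each socle element a reduced characteristic polynomial. For $\lambda\in\mathbb{C}$ set $P_{\lambda}(\zeta)=\prod_{\mu\in\sigma'(f(\lambda))}(1-\zeta\mu)^{m(\mu,f(\lambda))}$, a polynomial in $\zeta$ with $P_{\lambda}(0)=1$ and $\deg_{\zeta}P_{\lambda}\le r$ (by \eqref{beq0}, since the total multiplicity at the nonzero spectral values does not exceed the rank). Its coefficients are entire in $\lambda$: by property (iv) and the ideal property of the socle, the power traces $p_{j}(\lambda)=\mathrm{Tr}\,(f(\lambda)^{j})=\sum_{\mu}\mu^{j}m(\mu,f(\lambda))$ are holomorphic, and the coefficients of $P_{\lambda}$ are the universal polynomials in $p_{1}(\lambda),\dots,p_{r}(\lambda)$ furnished by Newton's identities (equivalently, one may invoke the analyticity of the Aupetit--Mouton determinant $\det(\mathbf 1-\zeta f(\lambda))$). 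Reversing, I set
$$\chi^{f}_{\lambda}(\zeta)=\zeta^{r}P_{\lambda}(\zeta^{-1}),$$
a \emph{monic} polynomial of degree exactly $r$ (its leading coefficient is $P_{\lambda}(0)=1$) with entire coefficients, whose nonzero roots, counted with multiplicity, are precisely $\sigma'(f(\lambda))$. I define $\chi^{g}_{\lambda}$ analogously, monic of degree $s$.

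Then I would encode the inclusion as a divisibility. Since the nonzero roots of $\chi^{f}_{\lambda}$ are $\sigma'(f(\lambda))$ while the roots of $\zeta\,\chi^{g}_{\lambda}(\zeta)$ are $\sigma'(g(\lambda))\cup\{0\}$, and since every root of $\chi^{f}_{\lambda}$ has multiplicity at most $r=\deg\chi^{f}_{\lambda}$, one has for each fixed $\lambda$
$$\sigma'(f(\lambda))\subseteq\sigma'(g(\lambda))\iff\chi^{f}_{\lambda}(\zeta)\mid\big(\zeta\,\chi^{g}_{\lambda}(\zeta)\big)^{r}.$$
Performing Euclidean division of $\big(\zeta\,\chi^{g}_{\lambda}(\zeta)\big)^{r}$ by the monic divisor $\chi^{f}_{\lambda}$ of \emph{fixed} degree $r$ produces a remainder $R_{\lambda}(\zeta)=\sum_{i=0}^{r-1}R_{i}(\lambda)\zeta^{i}$ whose coefficients $R_{i}(\lambda)$ are polynomial expressions in the entire coefficients of $\chi^{f}_{\lambda}$ and $\chi^{g}_{\lambda}$, hence entire; and the divisibility holds exactly when $R_{0}(\lambda)=\cdots=R_{r-1}(\lambda)=0$. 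The set $U_{0}=\{\lambda\in\mathbb{C}:z(\lambda)\in N\}$ is open and contains $0$, and for $\lambda\in U_{0}$ the hypothesis gives $\sigma'(f(\lambda))\subseteq\sigma'(g(\lambda))$, so each $R_{i}$ vanishes on $U_{0}$. By the identity theorem $R_{i}\equiv0$ on $\mathbb{C}$; in particular $R_{i}(1)=0$ for all $i$, which yields $\sigma'(f(1))\subseteq\sigma'(g(1))$, that is, $\sigma'(ax_{1})\subseteq\sigma'(bx_{1})$. As $x_{1}$ was arbitrary, the lemma follows.

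The main obstacle is the bookkeeping that makes the identity-theorem step legitimate: spectral inclusion is a statement about \emph{sets} of eigenvalues, and eigenvalues of $f(\lambda)$ may collide, branch, or run into $0$ as $\lambda$ varies, so one cannot track them individually. The device circumventing this is the passage to the \emph{monic} reversed polynomials $\chi^{f}_{\lambda},\chi^{g}_{\lambda}$, which have \emph{constant} degree in $\zeta$ regardless of rank drops; this is precisely what guarantees that Euclidean division produces entire (rather than merely meromorphic) remainder coefficients. The one external input is the holomorphic dependence on $\lambda$ of the coefficients of $P_{\lambda}$, which rests on property (iv) of the trace together with Newton's identities.
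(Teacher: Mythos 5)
Your proof is correct, but it takes a genuinely different route from the paper's. The paper fixes $x_{0} \in \left(E(a)\cap E(b)\right)\cap N$, uses the Scarcity Theorem to parametrize $\sigma'\left(f(\lambda)\right)$ and $\sigma'\left(g(\lambda)\right)$ locally by holomorphic branches, matches branches near $\lambda = 0$ by a pigeonhole-plus-identity-theorem argument, propagates the matching along chains of overlapping disks avoiding the discrete exceptional sets $F_{a}\cup F_{b}$, and finally recovers the exceptional points via upper semicontinuity of the spectrum and Newburgh's Theorem. You instead package the multiset $\sigma'\left(f(\lambda)\right)$ into the reversed polynomial $\chi^{f}_{\lambda}$, monic of \emph{constant} degree $r$ with entire coefficients, translate the set inclusion into the divisibility $\chi^{f}_{\lambda}\mid\left(\zeta\chi^{g}_{\lambda}\right)^{r}$, and annihilate the entire remainder coefficients by the identity theorem; your divisibility equivalence, the multiplicity bound $\deg_{\zeta}P_{\lambda}\leq r$ from (\ref{beq0}), and the entirety of the Euclidean remainder (which hinges precisely on the divisor being monic of fixed degree) all check out. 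Your route is cleaner in two respects: you never need $x_{0}$ to lie in $E(a)\cap E(b)$, and there are no exceptional values of $\lambda$ to patch up afterwards, so the continuation and Newburgh steps disappear and the conclusion holds at $\lambda = 1$ unconditionally. The price is a heavier external input: the holomorphy in $\lambda$ of the coefficients of $P_{\lambda}$ rests on the Aupetit--Mouton formula $\mathrm{Tr}\,(c^{j}) = \sum_{\mu}\mu^{j}m\left(\mu,c\right)$ (behaviour of multiplicities under the holomorphic functional calculus), or equivalently on the holomorphy of $\lambda \mapsto \det\left(\mathbf{1}-\zeta f(\lambda)\right)$; neither is among the trace properties (i)--(iv) quoted in this paper, though both are available in the cited reference, so you should cite them explicitly rather than lean only on property (iv).
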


\begin{proof}
	The hypotheses allows us to infer that both $a$ and $b$ are finite-rank elements. Recall that $E(a)$ and $E(b)$ are open and dense in $A$. Hence, $E(a) \cap E(b)$ is open and dense in $A$. Fix any $x_{0} \in \left(E(a) \cap E(b)\right) \cap N$ and let $x \in A$ be arbitrary. Define the following analytic functions from $\mathbb{C}$ into $\mathrm{Soc}\:A$:
	$$f\left(\lambda\right) = a\left[\left(1-\lambda\right)x_{0} + \lambda x\right] \;\,\mathrm{and}\;\, g\left(\lambda\right) = b\left[\left(1-\lambda\right)x_{0} + \lambda x\right] \;\,\left(\lambda \in \mathbb{C}\right).$$
	Let $\mathrm{rank}\,(a) = k$ and $\mathrm{rank}\,(b) = n$ and note that $k \leq n$ (since $\left(E(a) \cap E(b)\right) \cap N \neq \emptyset$). By the Scarcity Theorem there exist two closed and discrete subsets of $\mathbb{C}$, say $F_{a}$ and $F_{b}$, such that $\#\sigma'\left(f\left(\lambda\right)\right) = k$ for all $\lambda \in \mathbb{C} - F_{a}$ and $\#\sigma'\left(g\left(\lambda\right)\right) = n$ for all $\lambda \in \mathbb{C} - F_{b}$. Moreover, by the Scarcity Theorem, our choice of $x_{0}$, and the definitions of $f$ and $g$, there exists a real number $\epsilon > 0$ such that for all $\lambda \in B\left(0, \epsilon\right)$, 
	$$\sigma'\left(f\left(\lambda\right)\right) = \left\{\alpha_{1} \left(\lambda\right), \ldots,\alpha_{k} \left(\lambda\right)  \right\},\; \sigma'\left(g\left(\lambda\right)\right) = \left\{\gamma_{1} \left(\lambda\right), \ldots,\gamma_{n} \left(\lambda\right)  \right\},$$ $\sigma'\left(f\left(\lambda\right)\right) \subseteq \sigma'\left(g\left(\lambda\right)\right)$, and the $\alpha_{i}$'s and $\gamma_{i}$'s are all holomorphic on $B\left(0, \epsilon\right)$. Let $i \in \left\{1, \ldots, k \right\}$ be arbitrary but fixed. We claim that $\alpha_{i} = \gamma_{j}$ for some $j \in \left\{1, \ldots, n \right\}$: Fix any $\beta_{0}$ in $B\left(0, \epsilon\right)$ and let $\left(\lambda_{m}\right)$ be any sequence in $B\left(0, \epsilon\right) - \left\{\beta_{0} \right\}$ which converges to $\beta_{0}$. Since $\sigma'\left(f\left(\lambda\right)\right) \subseteq \sigma'\left(g\left(\lambda\right)\right)$ for each $\lambda \in B\left(0, \epsilon\right)$, it follows that $\alpha_{i} \left(\lambda_{m}\right) = \gamma_{j_{m}}\left(\lambda_{m}\right)$ for some $j_{m} \in \left\{1, \ldots, n \right\}$. However, by the Pigeon Hole Principle we may infer the existence of a subsequence, denoted by $\left(\lambda_{m}\right)$ for convenience, and a $j \in \left\{1, \ldots, n \right\}$ such that $\alpha_{i} \left(\lambda_{m}\right) = \gamma_{j}\left(\lambda_{m}\right)$. However, then the set $\left\{\lambda \in B\left(0, \epsilon\right): \alpha_{i} \left(\lambda\right) - \gamma_{j}\left(\lambda\right) = 0 \right\}$ contains a limit point. So, from elementary Complex Analysis we conclude that $\alpha_{i} = \gamma_{j}$. This proves our claim. Without loss of generality we may therefore assume that  $\sigma'\left(f\left(\lambda\right)\right) = \left\{\gamma_{1} \left(\lambda\right), \ldots,\gamma_{k} \left(\lambda\right)  \right\}$ for each $\lambda \in B\left(0, \epsilon\right)$. Pick any $\lambda_{0} \in \partial B\left(0, \epsilon\right) \cap \left[\mathbb{C} - \left(F_{a} \cup F_{b}\right)\right]$ (which exists since $F_{a}$ and $F_{b}$ are discrete), and let $z \in \mathbb{C} - \left(F_{a} \cup F_{b}\right)$ be arbitrary. We claim that $\sigma'\left(f\left(z\right)\right) \subseteq \sigma'\left(g\left(z\right)\right)$: Since $F_{a}$ and $F_{b}$ are discrete, we can find a path $\Gamma$ in $\mathbb{C} - \left(F_{a} \cup F_{b}\right)$ which connects $\lambda_{0}$ and $z$. Now, for each $\lambda \in \Gamma$, there exists a nonempty open disk $B_{\lambda} := B\left(\lambda, r_{\lambda}\right)$ such that for $\beta \in B_{\lambda}$, $$\sigma'\left(f\left(\beta\right)\right) = \left\{\alpha^{\left(\lambda\right)}_{1} \left(\beta\right), \ldots,\alpha^{\left(\lambda\right)}_{k} \left(\beta\right)  \right\}\,\;\mathrm{and}\;\, \sigma'\left(g\left(\beta\right)\right) = \left\{\gamma^{\left(\lambda\right)}_{1} \left(\beta\right), \ldots,\gamma^{\left(\lambda\right)}_{n} \left(\beta\right)  \right\},$$
	where the $\alpha^{\left(\lambda\right)}_{i}$'s and $\gamma^{\left(\lambda\right)}_{i}$'s are all holomorphic on $B_{\lambda}$. By compactness we can find $\lambda_{1}, \ldots, \lambda_{m} \in \Gamma$ such that $B_{\lambda_{i}} \cap B_{\lambda_{i+1}} \neq \emptyset$ for $i \in \left\{0, \ldots, m-1 \right\}$ and $B_{\lambda_{m}} \cap B_{z} \neq \emptyset$. Now, observe that 
	$$\sigma'\left(f\left(\beta\right)\right) = \left\{\gamma_{1} \left(\beta\right), \ldots,\gamma_{k} \left(\beta\right)  \right\}\,\; \mathrm{and}\;\, \sigma'\left(g\left(\beta\right)\right) = \left\{\gamma_{1} \left(\beta\right), \ldots,\gamma_{n} \left(\beta\right)  \right\}$$
	for each $\beta \in B\left(0, \epsilon\right) \cap B_{\lambda_{0}}$. Since $B\left(0, \epsilon\right) \cap B_{\lambda_{0}}$ is a nonempty open set, it follows in a similar way as before that $$\sigma'\left(f\left(\beta\right)\right) = \left\{\gamma^{\left(\lambda_{0}\right)}_{1} \left(\beta\right), \ldots,\gamma^{\left(\lambda_{0}\right)}_{k} \left(\beta\right)  \right\}$$ for each $\beta \in B_{\lambda_{0}}$. Hence, $\sigma'\left(f\left(\beta\right)\right) \subseteq \sigma'\left(g\left(\beta\right)\right)$ for each $\beta \in B_{\lambda_{0}}$. Repeating this argument with the chain of intersecting open disks we may conclude that $\sigma'\left(f\left(\beta\right)\right) \subseteq \sigma'\left(g\left(\beta\right)\right)$ for each $\beta \in B_{z}$. This proves our claim. Since 
	$$z \in \mathbb{C} - \left(F_{a} \cup F_{b}\right)$$
	was arbitrary, $\sigma'\left(f\left(z\right)\right) \subseteq \sigma'\left(g\left(z\right)\right)$ for all $z \in \mathbb{C} - \left(F_{a} \cup F_{b}\right)$. Thus, by a straightforward argument, using the upper semicontinuity of the spectrum and Newburgh's Theorem \cite[Theorem 3.4.4]{aupetit1991primer}, we may conclude that the spectral containment extends to all of $\mathbb{C}$. Hence, 
	$$\sigma'(ax) = \sigma'\left(f\left(1\right)\right) \subseteq \sigma'\left(g\left(1\right)\right) = \sigma'(bx).$$ 
	Since $x \in A$ was arbitrary, this establishes the result.   
\end{proof}

The Jacobson radical formula is really only a particular case of a more general type of spectral calculus: Suppose $\sigma (bx)$ is finite for all $x \in A$. If for each $x \in A$ we have that $\sigma' (ax)$ is a portion of $\sigma' (bx)$, then ``$a$ is a portion of $b$'' in the following sense:  

\begin{theorem}\label{3.b}
	Let $N$ be an arbitrary nonempty open subset of $A$ and let $a, b \in A$. If $\sigma (bx)$ is finite for each $x \in N$, and $\sigma' (ax) \subseteq \sigma' (bx)$ for each $x \in N$ then $a$ commutes with $b$ and, either $a = 0$, or there exist rank one elements $a_{1}, \ldots, a_{n}$, and $k \leq n$ such that
	$$a = a_{1} + \cdots +a_{k}\;\,\mathrm{and}\;\,b = a_{1} + \cdots +a_{n}.$$
	Moreover, $a$ is orthogonal to $b-a$. 
\end{theorem}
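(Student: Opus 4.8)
The plan is to globalize the hypothesis, decompose the socle into orthogonal minimal ideals, and then reduce each component to a prime matrix corner on which the results already established apply.

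First I would invoke Lemma~\ref{3.f}: since $\sigma(bx)$ is finite on the open set $N$ and $\sigma'(ax)\subseteq\sigma'(bx)$ there, both $a$ and $b$ are finite-rank elements and $\sigma'(ax)\subseteq\sigma'(bx)$ holds for \emph{all} $x\in A$. The case $a=0$ being trivial, assume $a\neq0$. By the orthogonal decomposition of the socle recorded before Theorem~\ref{3.5} (cf. \cite[Lemma 3.5]{tracesocleident} and \cite[Lemma 2.2]{commutatorsinsocle}), there is a finite family of pairwise orthogonal minimal two-sided ideals $J_{e_1},\dots,J_{e_r}$ such that $a=\sum_j a_j$ and $b=\sum_j b_j$ with $a_j,b_j\in J_{e_j}$, and $J_{e_i}J_{e_j}=\{0\}$ for $i\neq j$.

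The core step is the \emph{local dichotomy} $a_j\in\{0,b_j\}$ for each $j$. Fix $j$ and choose a finite-rank idempotent $f\in J_{e_j}$ acting as a two-sided local unit for $a_j$ and $b_j$ (such an $f$ exists because the socle is a ring with local units); then $a_j,b_j\in fAf$. By \cite[Lemma 2.5]{spectrumpreservers}, $fAf$ is a semisimple Banach algebra with identity $f$ on which the nonzero spectrum agrees with that of $A$, and since $J_{e_j}$ is a minimal two-sided ideal we have $fAf\cong M_{m}(\mathbb{C})$ by \cite[Theorem 3.8, Theorem 3.9]{tracesocleident}; in particular $fAf$ is prime. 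For $x\in fAf$ one checks that $af=a_j$ and $bf=b_j$ (the remaining ideal components are annihilated by $f$), whence $ax=a_jx$ and $bx=b_jx$, so $\sigma'(a_jx)\subseteq\sigma'(b_jx)$, and therefore $\rho(a_jx)\le\rho(b_jx)$, for all $x\in fAf$. If $b_j=0$ this forces $\rho_{fAf}(a_jx)=0$ for all $x\in fAf$, so $a_j\in\mathrm{Rad}\,(fAf)=\{0\}$; otherwise Theorem~\ref{3.9}, applied inside the prime algebra $fAf$, yields $a_j=\lambda_j b_j$ for some $\lambda_j\in\mathbb{C}$. To pin down $\lambda_j$, I would use that $b_j$ is von Neumann regular in $fAf$ (\cite[Corollary 2.10]{aupetitmoutontrace}): there is $z\in fAf$ with $b_jz$ a nonzero idempotent, so $\sigma'(b_jz)=\{1\}$, and the containment $\sigma'(\lambda_j b_j z)\subseteq\{1\}$ forces $\lambda_j\in\{0,1\}$. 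Thus $a_j=0$ or $a_j=b_j$.

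Finally I would assemble the pieces. Put $T=\{j:a_j=b_j\}$, so that $a=\sum_{j\in T}b_j$ and $b-a=\sum_{j\notin T}b_j$. Since distinct ideals are orthogonal, $a(b-a)=(b-a)a=0$, i.e. $a$ is orthogonal to $b-a$; consequently $ab=a^2=ba$, so $a$ commutes with $b$. Each $b_j\in\mathrm{Soc}\,A$ is a finite sum of rank one elements (every generator product $x e_j y$ has rank at most one), so listing the rank one summands of $\sum_{j\in T}b_j$ as $a_1,\dots,a_k$ and those of $\sum_{j\notin T}b_j$ as $a_{k+1},\dots,a_n$ gives $a=a_1+\cdots+a_k$ and $b=a_1+\cdots+a_n$, as required. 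The main obstacle is precisely the local dichotomy: isolating each ideal component and realizing it inside a prime matrix corner where Theorem~\ref{3.9} is available. This is where care is needed, namely to produce a genuine two-sided local unit $f$, to verify that $ax=a_jx$ and $bx=b_jx$ throughout $fAf$, and to match the nonzero spectra computed in $fAf$ with those computed in $A$.
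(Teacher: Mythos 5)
Your proposal is correct in substance but takes a genuinely different route from the paper's. The paper never leaves the trace/Riesz-projection machinery: it fixes a single $x\in\left(E(a)\cap E(b)\right)\cap G(A)$, diagonalizes $ax$ and $bx$, uses continuity of the trace on rank one elements together with \cite[Corollary 3.6]{aupetitmoutontrace} to show that $a'p=b'p$ for every rank one projection $p$ with $a'p\neq 0$, and then compares the contour-integral formulas for the Riesz projections of $ax$ and $bx$ to conclude that the minimal projections of $ax$ occur among those of $bx$; orthogonality and commutativity are obtained afterwards by letting $x_{n}\rightarrow\mathbf{1}$ through $E(a)\cap E(b)$. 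You instead decompose $a$ and $b$ along the homogeneous components $J_{e_{j}}$ of the socle and reduce each component to a matrix corner $fAf$, where Theorem \ref{3.9} (whose proof chain through Lemmas \ref{3.4}--\ref{3.8} is independent of Theorem \ref{3.b}, so there is no circularity) yields $a_{j}=\lambda_{j}b_{j}$, and a von Neumann regular idempotent pins $\lambda_{j}\in\left\{0,1\right\}$. This buys a formally sharper conclusion --- $a$ is a sum of complete homogeneous components of $b$, from which orthogonality, commutativity and the rank one decomposition fall out at once --- but it rests on two facts you assert without proof. First, the existence of a single idempotent $f\in J_{e_{j}}$ that is a \emph{two-sided} unit for both $a_{j}$ and $b_{j}$: this is true (finitely generated one-sided ideals of the von Neumann regular ring $\mathrm{Soc}\:A$ are generated by idempotents, and a left unit $e'$ and a right unit $e''$ chosen so that $e'e''=e'$ combine into the two-sided unit $f=e''+e'-e''e'$), but it is a genuine lemma, not a throwaway remark, and nothing in the paper supplies it. Second, $fAf\cong M_{m}\left(\mathbb{C}\right)$ is cited from \cite{tracesocleident} in a setting ($f$ in a minimal two-sided ideal, with $\mathrm{Soc}\:A$ itself possibly not minimal) more general than the one in which the paper invokes that reference; here you should argue directly that $fAf=fJ_{e_{j}}f$ is a finite-dimensional corner of the simple ring $J_{e_{j}}$, hence simple, hence a full matrix algebra. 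With those two points supplied, your proof is complete and arguably more structural than the paper's.
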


\begin{proof}
	As before, by the hypotheses above, it follows that both $a$ and $b$ have finite rank. Moreover, by Lemma \ref{3.f} it follows that the spectral containment assumption ``for all $x \in N$'' may be replaced by ``for all $x \in A$''. Now, if $\sigma (ax) = \left\{0 \right\}$ for all $x \in A$, then by the semisimplicity of $A$ we may infer that $a = 0$. We may therefore assume that $a \neq 0$ and conclude that $\mathrm{rank}\,(b) = n \geq 1$. Recall that $E(a) \cap E(b)$ is an open dense subset of $A$ since $E(a)$ and $E(b)$ are both open and dense. Further, since $\sigma' (ax) \subseteq \sigma' (bx)$ for each $x \in A$, it follows in particular that $\mathrm{rank}\,(a) \leq \mathrm{rank}\,(b)$. Since $G(A)$ is open and $E(a) \cap E(b)$ is dense, we can fix an $x \in \left(E(a) \cap E(b)\right) \cap G(A)$. By the Diagonalization Theorem and our hypothesis on the spectrums of $ax$ and $bx$, we can find $n$ mutually orthogonal rank one projections $p_{1}, \ldots, p_{n}$, $k$ mutually orthogonal rank one projections $q_{1}, \ldots, q_{k}$ (with $k \leq n$), and nonzero complex numbers $\alpha_{1}, \ldots, \alpha_{n}$ such that
	\begin{equation}
	bx = \alpha_{1}p_{1} + \cdots +\alpha_{n}p_{n} \;\,\mathrm{and}\;\,
	ax = \alpha_{1}q_{1} + \cdots +\alpha_{k}q_{k}.
	\label{eq3.a}
	\end{equation}
	Set $b' = bx$ and $a'= ax$. Let $p$ be any rank one projection such that $a'p \neq 0$. Then $a'p$ has rank one. Moreover, by the containment above and the fact that $E\left(a'p\right)$ is dense, it follows that $\sigma' \left(a'py\right) = \sigma' \left(b'py\right)$ for all $y$ in a dense subset of $A$. Thus, $\mathrm{Tr}\,\left(a'py\right) = \mathrm{Tr}\,\left(b'py\right)$ for all $y$ in a dense subset of $A$. However, by \cite[Lemma 2.3]{tracesocleident} the trace is continuous on the set of rank one elements. Hence, $\mathrm{Tr}\,\left(a'py\right) = \mathrm{Tr}\,\left(b'py\right)$ for all $y \in A$, and so, $a'p = b'p$ by \cite[Corollary 3.6]{aupetitmoutontrace}. A similar statement is valid for multiplication on the left. We shall use this to show that $q_{j} = p_{j}$ for each $j \in \left\{1, \ldots, k \right\}$. For $j \in \left\{1, \ldots, k \right\}$ we have (cf. the remark following (\ref{beq1}))
	\begin{equation}
	q_{j} = \frac{1}{2\pi i} \int_{\Gamma_{j}} \left(\lambda \mathbf{1} - a'\right)^{-1}\,d\lambda
	\label{eq3.b}
	\end{equation}
	and
	\begin{equation}
	p_{j} = \frac{1}{2\pi i} \int_{\Gamma_{j}} \left(\lambda \mathbf{1} - b'\right)^{-1}\,d\lambda,
	\label{eq3.c}
	\end{equation}
	where $\Gamma_{j}$ is a small circle surrounding $\alpha_{j}$ and separating it from $0$ and the remaining spectrum of $b'$. From (\ref{eq3.a}) it follows that $q_{j}a' = a'q_{j} \neq 0$ so, by the preceding paragraph, we have $q_{j}b' = q_{j}a'$ and $b'q_{j} = a'q_{j}$ and hence that
	\begin{equation}
	q_{j}p_{j} = \frac{1}{2\pi i} \int_{\Gamma_{j}} q_{j} \left(\lambda \mathbf{1} - b'\right)^{-1}\,d\lambda = \frac{1}{2\pi i} \int_{\Gamma_{j}}q_{j} \left(\lambda \mathbf{1} - a'\right)^{-1}\,d\lambda = q_{j}^{2} = q_{j},
	\label{eq3.d}
	\end{equation}
	and similarly $p_{j}q_{j} = q_{j}$. Now, if $p_{j}a' = 0$ then
	$$p_{j}q_{j} = \frac{1}{2\pi i} \int_{\Gamma_{j}}p_{j} \left(\lambda \mathbf{1} - a'\right)^{-1}\,d\lambda = \frac{1}{2\pi i} \int_{\Gamma_{j}}\frac{p_{j}}{\lambda}\,d\lambda = 0$$
	which contradicts the first calculation that $p_{j}q_{j} = q_{j} \neq 0$. Thus, $p_{j}a' \neq 0$ from which we have $p_{j}a' = p_{j}b'$. From a similar argument we have $a'p_{j} = b'p_{j}$. As in (\ref{eq3.d}), but now using (\ref{eq3.b}), we have $p_{j}q_{j} = p_{j} = q_{j}p_{j}$. Hence, $ax = \alpha_{1}p_{1} + \cdots +\alpha_{k}p_{k}$. Since $x$ is invertible we can solve for $a$ and $b$ in (\ref{eq3.a}) and our result follows with $a_{j} = \alpha_{j}p_{j}x^{-1}$. Now, since $E(a) \cap E(b)$ is dense and open in $A$ we can find a sequence $\left(x_{n}\right) \subseteq E(a) \cap E(b)$ such that $x_{n} \rightarrow \mathbf{1}$ as $n \rightarrow \infty$. But for each $x_{n}$ the first part of the proof shows that
	$$ax_{n} \left(bx_{n} - ax_{n}\right) = \left(bx_{n} - ax_{n}\right)ax_{n} = 0.$$
	So in the limit we obtain $a(b-a) = (b-a)a = 0$ and hence also $ab=ba$.
\end{proof}

It is immediate from the above result that if we add to the assumptions the requirement that $\mathrm{rank}\,(a) = \mathrm{rank}\,(b)$, then $a = b$. With the hypothesis of Theorem \ref{3.b} $a$ inherits analytic properties from $b$:

\begin{corollary}\label{3.c}
	Suppose $a$ and $b$ satisfy the hypothesis of Theorem \textnormal{\ref{3.b}} and that $a \neq 0$. If $f(\lambda)$ is holomorphic on a domain $D$ containing $\sigma (b)$ and $f(b) = 0$, then also $f(a)=0$ and $f(b-a)=0$. In particular, if $b$ is a projection then so is $a$.
\end{corollary}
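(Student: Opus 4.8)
The plan is to invoke Theorem \ref{3.b} and then reduce the whole statement to a fact about two orthogonal two-sided ideals, which sidesteps any delicate analysis of the Jordan structure of $a$ and $b$ at $0$. Write $c = b - a$. By Theorem \ref{3.b}, $a$ and $b$ commute and $ac = ca = 0$, and I would first upgrade this elementwise orthogonality to the stronger ideal-theoretic orthogonality $aAc = cAa = \{0\}$. The proof of Theorem \ref{3.b} establishes $(ax)(cx) = (cx)(ax) = 0$ for every $x$ in the dense set $E(a) \cap E(b)$; since $x \mapsto axcx$ and $x \mapsto cxax$ are continuous, these identities persist for all $x \in A$. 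Polarizing $a(x+y)c(x+y) = 0$ and then specializing $y = \mathbf{1}$ (using $ac = 0$) gives $aAc = \{0\}$, and symmetrically $cAa = \{0\}$. Hence the two-sided ideals $J_a := AaA$ and $J_c := AcA$ satisfy $J_aJ_c = J_cJ_a = \{0\}$, and any $z \in J_a \cap J_c$ obeys $zAz \subseteq J_aJ_c = \{0\}$, forcing $z = 0$ by semisimplicity; that is, $J_a \cap J_c = \{0\}$.

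The second step is the functional-calculus decomposition $f(b) = f(a) + f(c)$. Since $b$ is a finite-rank element it is not invertible, so $0 \in \sigma(b) \subseteq D$ and the hypothesis $f(b) = 0$ forces $f(0) = 0$. From $ac = ca = 0$ one verifies directly the resolvent identity
\begin{equation*}
(\lambda\mathbf{1} - b)^{-1} = (\lambda\mathbf{1} - a)^{-1} + (\lambda\mathbf{1} - c)^{-1} - \lambda^{-1}\mathbf{1}, \qquad \lambda \notin \sigma(a) \cup \sigma(c) \cup \{0\},
\end{equation*}
the check reducing to the relations $c(\lambda\mathbf{1}-a)^{-1} = \lambda^{-1}c$ and $a(\lambda\mathbf{1}-c)^{-1} = \lambda^{-1}a$. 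Because $ac = ca = 0$ yields $\sigma'(b) = \sigma'(a) \cup \sigma'(c)$ by \cite[Chapter 3, Exercise 9]{aupetit1991primer}, we have $\sigma(a), \sigma(c) \subseteq \sigma(b) \subseteq D$, so $f(a)$ and $f(c)$ are defined; integrating $f(\lambda)$ against the resolvent identity over a contour in $D$ surrounding $\sigma(b)$ (and hence surrounding $0$) and using $f(0) = 0$ gives $f(b) = f(a) + f(c)$. Moreover, since $f(0) = 0$, the Cauchy formula places $f(a) \in aA \cap Aa \subseteq J_a$ and likewise $f(c) \in J_c$ (cf. the remark following (\ref{beq1})).

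Combining the two steps, $f(b) = 0$ yields $f(a) = -f(c) \in J_a \cap J_c = \{0\}$, so $f(a) = 0$ and $f(b-a) = f(c) = 0$, as required. The concluding assertion is then immediate from the choice $f(\lambda) = \lambda^2 - \lambda$: if $b^2 = b$ then $f(b) = 0$ and $f(0) = 0$ automatically, so $a^2 - a = f(a) = 0$ and $a$ is a projection. I expect the genuine obstacle to be the first step, since the elementwise orthogonality $ac = ca = 0$ on its own is too weak — it would a priori permit nilpotent interaction at $0$ that could defeat the conclusion — and the argument really rests on extracting the two-sided orthogonality $aAc = cAa = \{0\}$ from the proof of Theorem \ref{3.b}, after which semisimplicity performs the separation through $J_a \cap J_c = \{0\}$.
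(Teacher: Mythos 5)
Your argument is essentially sound and takes a genuinely different route from the paper's, but it contains one false assertion that must be repaired. The claim that $b$, being a finite-rank element, cannot be invertible is wrong: in a finite-dimensional semisimple algebra (e.g.\ $M_{n}\left(\mathbb{C}\right)$) every element has finite rank and $\mathbf{1}$ is invertible. The paper disposes of the case $b \in G(A)$ separately at the outset (Lemma \ref{3.f} together with \cite[Theorem 2.1]{uniqueness} give $a = \alpha b$, hence $a = b$ by the remark following Theorem \ref{3.b}) and only then assumes $b \notin G(A)$ --- which is exactly what you need in order to conclude $0 \in \sigma (b) \subseteq D$ and $f(0) = 0$. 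With that case split inserted, the rest of your argument goes through: the identity $(ax)(cx) = (cx)(ax) = 0$ is indeed established in the proof of Theorem \ref{3.b} for all $x$ in the dense set $E(a) \cap E(b)$, so continuity of $x \mapsto axcx$ extends it to all of $A$; your polarization then correctly yields $aAc = cAa = \left\{0\right\}$; the resolvent identity is easily checked from $ac = ca = 0$; and $f(a) \in aA \cap Aa$ follows from $f(0) = 0$ exactly as in (\ref{beq1}).

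The paper's own proof is quite different. It factors $f = hg$ with $h$ a polynomial without constant term vanishing on $\sigma (b)$ and $g$ nonvanishing there, reduces the problem to $h(b) = 0$, uses the orthogonality of $a$ and $b - a$ to obtain $h(a) = -h(b-a)$, and then rules out $h(a) \neq 0$ by showing that a nontrivial relation $a_{m}x_{m} = a_{1}x_{1} + \cdots + a_{m-1}x_{m-1}$ among the rank-one summands would force $\mathrm{rank}\,(b) < n$ via subadditivity of the rank. Your route replaces this rank bookkeeping with the ideal-theoretic separation $J_{a} \cap J_{c} = \left\{0\right\}$, which is arguably cleaner and makes transparent why no nilpotent interaction at $0$ can occur; the price is that you must extract the two-sided orthogonality $aAc = \left\{0\right\}$ from the \emph{proof} of Theorem \ref{3.b} rather than from its statement, since the stated conclusion $a(b-a) = (b-a)a = 0$ alone would not suffice for your argument.
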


\begin{proof}
	If $b$ is invertible, then by Lemma \ref{3.f} and \cite[Theorem 2.1]{uniqueness} we have $a = \alpha b$ for some $\alpha \in \mathbb{C}$. So $\mathrm{rank}\,(a) = \mathrm{rank}\,(b)$, and the comment following Theorem \ref{3.b} readily yields $a = b$. We may therefore assume that $b \notin G(A)$ and that $\mathrm{rank}\,(a) < \mathrm{rank}\,(b) = n \neq 0$. Moreover, we may also assume that $f$ is not identically $0$. By hypothesis and the Spectral Mapping Theorem, $f\left(\sigma (b)\right) = \sigma \left(f(b)\right) = \left\{0 \right\}$. Hence, $f$ has zeroes at the spectral points of $b$. By \cite[Corollary 4.3.9]{conway1978functions} there exists a polynomial $h\left(\lambda\right)$ without a constant term and a holomorphic function $g\left(\lambda\right)$ on $D$ such that $f(\lambda) = h\left(\lambda\right)g\left(\lambda\right)$ and $g(\alpha) \neq 0$ for all $\alpha \in \sigma (b)$. In particular then, $g(b)$ is invertible by the Spectral Mapping Theorem. Hence, since $0 = f(b) = h(b)g(b)$ (by the Holomorphic Functional Calculus), we have $h(b) = 0$. Since $a$ and $b-a$ are orthogonal, it follows that $h(a) = -h(b-a)$. For the sake of a contradiction suppose that $h(a)$ and $h(b-a)$ are not zero. Then since $a = a_{1} + \cdots +a_{k+1}$ and $b-a = a_{k+1} + \cdots +a_{n}$ by Theorem \ref{3.b}, it follows that there is a largest integer $k+1 \leq m \leq n$ and $x_{1}, \ldots, x_{m} \in A$ such that
	$$0 \neq a_{m}x_{m} = a_{1}x_{1} + \cdots +a_{m-1}x_{m-1}.$$
	Since $a_{m}$ has rank one the minimal right ideal $a_{m}A = a_{m}x_{m}A$ which shows that $a_{m} \in a_{1}A + \cdots + a_{m-1}A$. However, by the subadditivity of the rank we then obtain that $\mathrm{rank}\,(b) < n$ which is absurd. Thus, $h(a) = h(b-a) = 0$ and the result follows from the Holomorphic Functional Calculus. The last part of the statement is obvious.
\end{proof}

The next result is similar in spirit to Theorem \ref{3.b}:

\begin{theorem}\label{3.d}
	Let $p$ be a projection of $A$, let $q \in A$, and suppose there exist a neighbourhood $N_{p}$ of $p$ and a neighbourhood $N_{\mathbf{1}-p}$ of $\mathbf{1}-p$ such that
	$$\#\sigma (qx) = \#\sigma (px) < \infty \,\;\mathrm{for\;all}\;\,x \in N_{p} \cup N_{\mathbf{1}-p}.$$
	Then $q$ is a scalar multiple of $p$ or $q$ is a scalar multiple of the identity.
\end{theorem}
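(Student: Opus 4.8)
The plan is to strip $q$ down onto the corner $pAp$ and then settle the problem there by an elementary spectral-counting argument.

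\emph{Finiteness and set-up.} Since $\#\sigma(px)$ and $\#\sigma(qx)$ are finite on the nonempty open set $N_{p}\cup N_{\mathbf 1-p}$, the Baire-category-plus-Scarcity argument from the proof of Theorem \ref{3.a} forces both $p$ and $q$ to have finite rank. In particular $p\in\mathrm{Soc}\,A$, so the corner $C:=pAp$ is a \emph{finite-dimensional} semisimple algebra with identity $p$, on which $\sigma'_{A}(\cdot)=\sigma'_{C}(\cdot)$. The two endpoints are degenerate: if $p=0$ then $N_{p}$ gives $\#\sigma(qx)=1$ near $0$, hence (scaling) for all $x$, whence $q=0$; and the scalarising argument below applied in $C=A$ (with $N_{\mathbf 1-p}=N_{0}$ replacing the corner tests) yields $q=\lambda\mathbf 1$ when $p=\mathbf 1$. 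So assume $0\neq p\neq\mathbf 1$.

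\emph{Reducing $q$ to the corner.} I would use $N_{\mathbf 1-p}$ to prove $q=pqp$. Testing with $x\in(\mathbf 1-p)A(\mathbf 1-p)$ near its identity $\mathbf 1-p$ gives $px=0$, so $\#\sigma(px)=1$ and hence $\#\sigma(qx)=1$; as such a $qx$ annihilates $p$ on the right it is non-invertible, so $\sigma(qx)=\{0\}$. By cyclicity of the nonzero spectrum, $\sigma'(qx)=\sigma'(xq)=\sigma'\bigl(x(\mathbf 1-p)q(\mathbf 1-p)\bigr)$, so $x\cdot(\mathbf 1-p)q(\mathbf 1-p)$ is quasinilpotent for all such $x$; the scaling/entire-function device from the proof of Theorem \ref{3.1} upgrades this to all $x\in(\mathbf 1-p)A(\mathbf 1-p)$, forcing $(\mathbf 1-p)q(\mathbf 1-p)\in\mathrm{Rad}\bigl((\mathbf 1-p)A(\mathbf 1-p)\bigr)=\{0\}$. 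For the off-diagonal pieces I would test with $x=(\mathbf 1-p)+w$, $w$ running over the square-zero corner $(\mathbf 1-p)Ap$: the same mechanism reduces matters to ``$wq(\mathbf 1-p)$ is quasinilpotent for every $w$'', and if $pq(\mathbf 1-p)\neq0$ its Von Neumann regular inverse yields a $w$ for which $wq(\mathbf 1-p)$ has a nonzero idempotent part, a contradiction. Thus $pq(\mathbf 1-p)=0$, symmetrically $(\mathbf 1-p)qp=0$, and so $q=pqp\in C$.

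\emph{Scalarising on the corner.} With $q\in C$ one has $\sigma'(qx)=\sigma'_{C}(q\,pxp)$ and $\sigma'(px)=\sigma'_{C}(pxp)$ for every $x$, with $qx,px$ both non-invertible in $A$. Feeding in the elements $x=(\mathbf 1-p)+c$, $c\in C$ small (all lying in $N_{\mathbf 1-p}$), turns the hypothesis into $\#\sigma'_{C}(qc)=\#\sigma'_{C}(c)$ for $c$ near $0$ in $C$, and homogeneity of $\#\sigma'_{C}$ promotes this to $\#\sigma'_{C}(qc)=\#\sigma'_{C}(c)$ for \emph{every} $c\in C$. Taking $c=p$ gives $\#\sigma'_{C}(q)=1$, and taking $c$ with the maximal number of distinct nonzero eigenvalues shows $\mathrm{rank}_{C}(q)$ is full; hence $q$ is invertible in $C$ with $\sigma_{C}(q)=\{\lambda\}$, so $q=\lambda p+n$ with $n$ nilpotent. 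Finally, applying the identity to a nilpotent $c$ (where $\#\sigma'_{C}(c)=0$) shows $qc$ is nilpotent for every nilpotent $c\in C$; a suitable rank-one nilpotent $c$ with $\mathrm{Tr}(qc)\neq0$ whenever $n\neq0$ contradicts this, so $n=0$ and $q=\lambda p$. For $p=\mathbf 1$ the same conclusion reads $q=\lambda\mathbf 1$.

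The main obstacle is the middle step: the corner bookkeeping and the cyclic invariance of the nonzero spectrum must be tracked carefully, and each ``quasinilpotent near the identity $\Rightarrow$ quasinilpotent everywhere'' passage rests on the entire-function/Scarcity argument. A secondary subtlety is that the two neighbourhoods are both genuinely needed only to cover the degenerate endpoints $p=0$ and $p=\mathbf 1$, where one corner collapses; for $0\neq p\neq\mathbf 1$ the neighbourhood $N_{\mathbf 1-p}$ already drives the entire argument.
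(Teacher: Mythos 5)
Your proposal is essentially correct in the main case $0\neq p\neq\mathbf 1$, and it takes a genuinely different route from the paper. The paper divides the labour between the two neighbourhoods: from $N_{p}$ it extracts $\#\sigma_{pAp}'\left((pqp)(pxp)\right)=\#\sigma_{pAp}'(pxp)$ for $pxp$ near $p$, deduces that $pqp$ has full rank in the finite-dimensional corner (hence is invertible there), and then invokes \cite[Theorem 2.1]{uniqueness} to get $pqp=\alpha p$; from $N_{\mathbf 1-p}$ it kills $q(\mathbf 1-p)$ and $(\mathbf 1-p)q$ by the Scarcity/semisimplicity device. You instead run everything through $N_{\mathbf 1-p}$: the diagonal corner $(\mathbf 1-p)A(\mathbf 1-p)$ and the two off-diagonal corners are annihilated by testing at square-zero perturbations of $\mathbf 1-p$, and the remaining corner $pAp$ is settled by hand (full rank $\Rightarrow$ invertible, one nonzero spectral value, and the nilpotent part killed by pairing against rank-one nilpotents via the trace). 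I checked the individual steps: the block computations, the Jacobson/homogeneity bookkeeping, and the two asserted lemmas (a Von Neumann regular ``inverse'' of $pq(\mathbf 1-p)$ producing a nonzero idempotent $pq(\mathbf 1-p)w$, and the existence of a rank-one nilpotent $c$ with $\mathrm{Tr}(nc)\neq 0$ when $n\neq 0$) all go through, the latter because $\mathrm{Tr}(nc)=0$ for every rank-one nilpotent $c$ would force $n$ to be a scalar, hence $0$. What your route buys is a sharper statement --- for $0\neq p\neq\mathbf 1$ only the neighbourhood of $\mathbf 1-p$ is used --- and independence from the uniqueness theorem of \cite{uniqueness}; the cost is considerably more corner bookkeeping, and you should be explicit that the finite-dimensionality of $pAp$ (from $\mathrm{rank}\,(p)<\infty$) underlies both ``full rank implies invertible'' and ``quasinilpotent implies nilpotent''.

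There is one genuine error, in the degenerate case $p=0$. From $\#\sigma(qx)=1$ for all $x$ you cannot conclude $q=0$: a one-point spectrum need not be $\left\{0\right\}$. The correct dichotomy (which is why the theorem's conclusion carries the second branch) is: if $q\notin G(A)$ then indeed $\sigma(qx)=\left\{0\right\}$ for all $x$ and $q\in\mathrm{Rad}\:A=\left\{0\right\}$; but if $q\in G(A)$ then every element of $A$ has one-point spectrum, whence $A\cong\mathbb{C}$ and $q$ is a nonzero scalar multiple of the identity. The algebra $A=\mathbb{C}$ with $p=0$ and $q=1$ satisfies your hypotheses and refutes your claim that $q=0$. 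The $p=\mathbf 1$ case is also only sketched, but your corner argument does apply there once one notes that the hypothesis forces $\mathbf 1\in\mathrm{Soc}\:A$, so $A$ itself is finite-dimensional.
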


\begin{proof}
	If $p = \mathbf{1}$, then by \cite[Theorem 2.1]{uniqueness} $q$ is a scalar multiple of the identity. If $p = 0$ and $q \notin G(A)$, then $q \in \mathrm{Rad}\:A = \left\{0 \right\}$. If $p = 0$ and $q \in G(A)$, then for all $x$ in a neighbourhood $N_{0}$ of $0$ we have that $\#\sigma (qx) = 1$ which by the Scarcity Theorem implies that every element of $A$ has one point spectrum. Thus, since $A$ is semisimple, $A\cong \mathbb{C}$ and hence $q$ is a scalar multiple of the identity. So assume that $p$ is neither $0$ nor $\mathbf{1}$, and that $q$ is not a scalar multiple of the identity. The hypothesis implies that, for all $x$ in some neighbourhood of $\mathbf{1}$, say $N_{1}$, we have $\#\sigma_{A} (qpxp) = \#\sigma_{A} (pxp)$. Moreover, since $yqpxp \in G(A)$ or $pxp \in G(A)$ implies $p = \mathbf{1}$ which contradicts our hypothesis on $p$, it follows that $0$ belongs to both $\sigma_{A} (yqpxp)$ and $\sigma_{A} (pxp)$ for all $x, y \in A$. Hence, by Jacobson's Lemma we may infer that $\#\sigma_{A} \left((pqp)(pxp)\right) = \#\sigma_{A} (pxp)$ for all $x \in N_{1}$. So it follows that
	$$  \#\sigma'_{pAp} \left((pqp)(pxp)\right) = \#\sigma'_{pAp} \left(p(pxp)\right) \;\,\mathrm{when}\;\,x \in N_{1}.$$
	Applying the Open Mapping Theorem to the continuous linear operator $x \mapsto pxp$ from $A$ onto $pAp$ we have that 
	$$  \#\sigma'_{pAp} \left((pqp)(pxp)\right) = \#\sigma'_{pAp} \left(p(pxp)\right)$$
	for all $pxp$ in some neighbourhood of $p$ in $pAp$. Hence, by the density of $E_{pAp}(pqp)$ and $E_{pAp}(p)$ in $pAp$ we may conclude that $\mathrm{rank}_{pAp} (pqp) = \mathrm{rank}_{pAp} (p)$. Whence, since $pAp$ is finite-dimensional, it follows that $pqp \in G(pAp)$. Thus,
	$$\#\sigma_{pAp} \left((pqp)(pxp)\right) = \#\sigma_{pAp} \left(p(pxp)\right)$$
	for all $pxp$ in some neighbourhood of $p$ in $pAp$. Hence, by \cite[Theorem 2.1]{uniqueness} it follows that $pqp = \alpha p$ for some $\alpha \in \mathbb{C}$. On the other hand, using the hypothesis with the neighbourhood $N_{\mathbf{1}-p}$ and the fact that $q$ is not a scalar multiple of the identity, it follows, for all $x$ in some neighbourhood of $\mathbf{1}$, that 
	$$\sigma \left(q\left(\mathbf{1}-p\right)x\right) = \sigma \left(\left(\mathbf{1}-p\right)qx\right) = \left\{0 \right\}.$$ 
	Hence, by the Scarcity Theorem and the semisimplicity of $A$ we get $q = pq = qp$. Therefore, $q = \alpha p$, which completes the proof. 
\end{proof}

\begin{theorem}\label{3.12}
	$\mathrm{Soc}\:A$ is a minimal two-sided ideal if and only if the following are equivalent for any $0 \neq a \in A$ and $0 \neq b \in \mathrm{Soc}\:A$:
	\begin{itemize}
		\item[\textnormal{(i)}]
		$\sigma' (ax) \subseteq \sigma' (bx)$ for all $x$ in some nonempty open set $N$.
		\item[\textnormal{(ii)}]
		$\sigma' (ax) \subseteq \sigma' (bx)$ for all $x \in A$.
		\item[\textnormal{(iii)}]
		$a = b$.
	\end{itemize}
\end{theorem}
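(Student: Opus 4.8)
The plan is to split the biconditional and observe that most of the ``cyclic'' content is already in hand. Since $0 \neq b \in \mathrm{Soc}\:A$ has finite rank, $\sigma(bx)$ is finite for every $x \in A$, so Lemma \ref{3.f} gives (i) $\Rightarrow$ (ii) directly, while (ii) $\Rightarrow$ (i) is the trivial restriction to $N$; thus (i) $\Leftrightarrow$ (ii) holds regardless of minimality. Likewise (iii) $\Rightarrow$ (ii) is immediate. Hence the theorem reduces to two substantive claims: \emph{(A)} if $\mathrm{Soc}\:A$ is a minimal two-sided ideal then (ii) $\Rightarrow$ (iii); and \emph{(B)} conversely, if the three conditions are equivalent for all admissible $a,b$, then $\mathrm{Soc}\:A$ is minimal.

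For \emph{(A)} I would feed (ii) into Theorem \ref{3.b}. Since $a \neq 0$, its proof produces an invertible $x_{0}$, mutually orthogonal rank one projections $p_{1}, \ldots, p_{n}$, nonzero scalars $\alpha_{1}, \ldots, \alpha_{n}$, and an integer $1 \leq k \leq n$ with
\[
ax_{0} = \alpha_{1}p_{1} + \cdots + \alpha_{k}p_{k} \quad\text{and}\quad bx_{0} = \alpha_{1}p_{1} + \cdots + \alpha_{n}p_{n}.
\]
Writing $a' = ax_{0}$, $b' = bx_{0}$ and replacing $x$ by $x_{0}y$ in (ii) yields $\sigma'(a'y) \subseteq \sigma'(b'y)$ for all $y \in A$. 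It suffices to prove $k = n$, for then $\mathrm{rank}\,(a) = \mathrm{rank}\,(b)$ and the remark following Theorem \ref{3.b} forces $a = b$. Suppose, for contradiction, that $k < n$ and set $q = p_{1} + p_{n}$, a rank two projection. Because $\mathrm{Soc}\:A$ is minimal, $qAq \cong M_{2}(\mathbb{C})$ by \cite[Theorem 3.8, Theorem 3.9]{tracesocleident}, and $\sigma'_{qAq}(\cdot) = \sigma'_{A}(\cdot)$ on $qAq$ exactly as in Lemma \ref{3.4}. Identifying $p_{1}, p_{n}$ with $e_{11}, e_{22}$, I would choose $u, v \in qAq$ corresponding to $e_{12}, e_{21}$ and put $y = p_{1} + u + v$.

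The crux is the evaluation of $a'y$ and $b'y$ for this ``connecting'' element. Using orthogonality together with $k < n$ (so $p_{i}p_{n} = 0$ for $i \leq k$), one gets $a'q = \alpha_{1}p_{1}$ and hence $a'y = \alpha_{1}(p_{1} + u)$, a nonzero scalar multiple of the idempotent $p_{1}+u$, so that $\alpha_{1} \in \sigma'(a'y)$. On the other hand $b'q = \alpha_{1}p_{1} + \alpha_{n}p_{n}$, so inside $qAq \cong M_{2}(\mathbb{C})$ one has $b'y = \left(\begin{smallmatrix} \alpha_{1} & \alpha_{1} \\ \alpha_{n} & 0 \end{smallmatrix}\right)$, and $\det(\alpha_{1}q - b'y) = -\alpha_{1}\alpha_{n} \neq 0$ shows $\alpha_{1} \notin \sigma_{qAq}(b'y)$, whence $\alpha_{1} \notin \sigma'_{A}(b'y)$. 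This contradicts $\sigma'(a'y) \subseteq \sigma'(b'y)$, so $k = n$ and $a = b$. I expect the construction of $y$ to be the main obstacle: it is the one place where minimality is genuinely used, via the fact that it guarantees $p_{1}Ap_{n} \neq \{0\} \neq p_{n}Ap_{1}$ (equivalently $qAq \cong M_{2}(\mathbb{C})$), which is precisely what lets the surplus projection $p_{n}$ be threaded back to $p_{1}$ through the off-diagonal corner.

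For \emph{(B)} I would argue contrapositively, mirroring the reverse half of Theorem \ref{3.5}. If $\mathrm{Soc}\:A$ is not minimal, then \cite[Lemma 3.5]{tracesocleident} supplies rank one projections $p, q$ with $J_{p}J_{q} = J_{q}J_{p} = \{0\}$. Take $a = p$ and $b = p + q$; these are admissible ($a \neq 0$ and $0 \neq b \in \mathrm{Soc}\:A$) and $a \neq b$. For each $x \in A$ we have $px \in J_{p}$ and $qx \in J_{q}$, so $(px)(qx) = (qx)(px) = 0$, and \cite[Chapter 3, Exercise 9]{aupetit1991primer} gives $\sigma'((p+q)x) = \sigma'(px) \cup \sigma'(qx) \supseteq \sigma'(px)$. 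Thus (ii) holds while (iii) fails, contradicting the assumed equivalence; hence $\mathrm{Soc}\:A$ must be minimal, which completes the proof.
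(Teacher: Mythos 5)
Your proposal is correct, and its reverse direction (your part (B)) is verbatim the paper's: the same projections $p,q$ with $J_pJ_q=J_qJ_p=\{0\}$, the same pair $a=p$, $b=p+q$, and the same appeal to \cite[Chapter 3, Exercise 9]{aupetit1991primer}. The forward direction, however, is genuinely different. The paper gets (ii) $\Rightarrow$ (iii) in two lines: (ii) implies $\rho(ax)\leq\rho(bx)$ for all $x$, so Theorem \ref{3.5} (which rests on Lemma \ref{3.4} and the result of \cite{scalars}) gives $a=\lambda b$ with $\lambda\neq 0$, hence $\rank(a)=\rank(b)$, and the remark after Theorem \ref{3.b} finishes. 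You instead bypass Theorem \ref{3.5} entirely: you go inside the proof of Theorem \ref{3.b} to extract $a'=\alpha_1p_1+\cdots+\alpha_kp_k$, $b'=\alpha_1p_1+\cdots+\alpha_np_n$, and rule out $k<n$ by an explicit $2\times 2$ computation in $qAq\cong M_2(\mathbb C)$ with $q=p_1+p_n$. I checked the computation: $y=p_1+u+v$ gives $a'y=\alpha_1(p_1+u)$, a nonzero multiple of an idempotent, so $\alpha_1\in\sigma'(a'y)$, while $b'y$ corresponds to $\bigl(\begin{smallmatrix}\alpha_1&\alpha_1\\ \alpha_n&0\end{smallmatrix}\bigr)$ whose characteristic polynomial does not vanish at $\alpha_1$ (value $-\alpha_1\alpha_n\neq 0$), and $\sigma'_{qAq}=\sigma'_A$ on $qAq$ transfers this to $A$ --- a genuine contradiction with $\sigma'(a'y)\subseteq\sigma'(b'y)$. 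Both routes ultimately invoke the same structural fact $pAp\cong M_n(\mathbb C)$ from \cite[Theorem 3.8, Theorem 3.9]{tracesocleident}; the paper's is shorter given that Theorem \ref{3.5} is already on the table, while yours is independent of the spectral-radius results (Lemma \ref{3.4}, Theorem \ref{3.5}, and \cite{scalars}) and pinpoints exactly where minimality is used, namely $p_1Ap_n\neq\{0\}$, which is precisely what fails in the counterexample of part (B).
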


\begin{proof}
	Suppose first that $\mathrm{Soc}\:A$ is a minimal two-sided ideal and let $0 \neq a \in A$ and $b \in \mathrm{Soc}\:A$. Obviously (iii) $\Rightarrow$ (i). Moreover, by Lemma \ref{3.f}, (i) $\Rightarrow$ (ii). So assume that condition (ii) holds. Since (ii) implies that $\rho (ax) \leq \rho (bx)$ for all $x \in A$, it readily follows from Theorem \ref{3.5} and hypothesis that $a = \lambda b$ for some $\lambda \in \mathbb{C}-\left\{0 \right\}$. Hence, $\mathrm{rank}\,(a) = \mathrm{rank}\,(b)$, and so, by Theorem \ref{3.b} and the remark following it, $a = b$. This proves the forward implication. For the other direction, we argue contrapositively. Suppose that $\mathrm{Soc}\:A$ is not a minimal two-sided ideal. Then by \cite[Lemma 3.5]{tracesocleident} we may infer the existence of two rank one projections $p$ and $q$ such that $J_{p}J_{q} = J_{q}J_{p} = \left\{0 \right\}$. However, as in the proof of Theorem \ref{3.5} this implies that $p \neq p+q$ and $\sigma' (px) \subseteq \sigma' \left(\left(p+q\right)x\right)$ for all $x \in A$. Hence, (ii) $\not\Rightarrow$ (iii), which establishes the result.
\end{proof}

Moreover, we obtain a similar characterization of prime Banach algebras as was done in Theorem \ref{3.9}:

\begin{theorem}\label{3.13}
	Suppose that $\mathrm{Soc}\:A \neq \left\{0 \right\}$. Then $A$ is prime if and only if for any $a, b \in A-\left\{0 \right\}$ we have that the following are equivalent:
	\begin{itemize}
		\item[\textnormal{(i)}]
		$\sigma' (ax) \subseteq \sigma' (bx)$ for all $x \in A$.
		\item[\textnormal{(ii)}]
		$a = b$.
	\end{itemize}
\end{theorem}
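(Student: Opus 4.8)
The plan is to establish the two implications by leveraging the machinery already developed, with the reverse direction being the genuinely new part and the forward direction following the same pattern as Theorem~\ref{3.9}. For the reverse implication I would argue contrapositively: assuming $A$ is not prime, I want to exhibit nonzero $a,b$ for which condition (i) holds but $a\neq b$, thereby breaking the equivalence. Since $A$ is not prime there exist nonzero two-sided ideals $I,J$ with $IJ=\left\{0\right\}$, and I would reuse the construction from Lemma~\ref{3.6}: pick $0\neq a\in I$ (noting $a\notin J$ by semisimplicity, since $a\in I\cap J$ would force $aAa=\left\{0\right\}$) and $0\neq b\in J$. The argument of Lemma~\ref{3.6} shows $I\subseteq r(J)$, so for every $x\in A$ one has $ax\in l(J)\cap r(J)$, whence $(ax)(bx)=(bx)(ax)=0$ and therefore $\sigma'((a+b)x)=\sigma'(ax)\cup\sigma'(bx)$ by \cite[Chapter 3, Exercise 9]{aupetit1991primer}. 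This gives $\sigma'(ax)\subseteq\sigma'((a+b)x)$ for all $x\in A$, yet $a\neq a+b$ since $b\neq 0$. So with the pair $(a,\,a+b)$ condition (i) holds while $a\neq a+b$, contradicting the assumed equivalence.

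For the forward implication, assume $A$ is prime and take nonzero $a,b\in A$ satisfying (i). Since $\mathrm{Soc}\:A\neq\left\{0\right\}$ and $A$ is prime, the remark preceding Theorem~\ref{3.5} tells us $\mathrm{Soc}\:A$ is a minimal two-sided ideal, and primeness also forces $l\left(\mathrm{Soc}\:A\right)=\left\{0\right\}$. The implication $\sigma'(ax)\subseteq\sigma'(bx)$ for all $x$ gives in particular $\rho(ax)\leq\rho(bx)$ for all $x\in A$, so Lemma~\ref{3.8} applies and yields $a=\lambda b$ for some $\lambda\in\mathbb{C}$ with $\left|\lambda\right|\leq 1$; since $a\neq 0$ we get $\lambda\neq 0$. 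To upgrade this to $a=b$, I would note that $b\in\mathrm{Soc}\:A$ (indeed $a=\lambda b$ forces $a\in\mathrm{Soc}\:A$ as well), so $a,b$ have finite rank and the scalar relation gives $\mathrm{rank}\,(a)=\mathrm{rank}\,(b)$. Theorem~\ref{3.b} then applies to the pair $a,b$, and the remark immediately following Theorem~\ref{3.b} (equal ranks forces equality) delivers $a=b$. The reverse implication (ii) $\Rightarrow$ (i) inside this equivalence is trivial.

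The main subtlety to watch is that Lemma~\ref{3.8} is stated for arbitrary $a,b\in A$ but requires both hypotheses $\mathrm{Soc}\:A$ minimal and $l(\mathrm{Soc}\:A)=\left\{0\right\}$; I must therefore verify both are available under the primeness assumption before invoking it, exactly as in Theorem~\ref{3.9}. A second point requiring care is that Theorem~\ref{3.b} presumes $\sigma(bx)$ is finite for $x$ in an open set, which is automatic here because the scalar relation places $b$ in the socle, guaranteeing finite rank and hence finite $\sigma(bx)$ for all $x$. I do not anticipate a deep obstacle: the real content has already been isolated in Lemmas~\ref{3.6} and~\ref{3.8} and in Theorem~\ref{3.b}, so the proof is primarily an assembly of these pieces, with the only genuine care being the bookkeeping that ensures each cited result's hypotheses are met.
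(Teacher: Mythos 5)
Your reverse implication is fine and is exactly the paper's argument (both recycle the construction from Lemma~\ref{3.6}). The problem is in the forward direction, at the step where you upgrade $a=\lambda b$ to $a=b$. You assert that ``the scalar relation places $b$ in the socle,'' but it does not: in Theorem~\ref{3.13} the element $b$ is an \emph{arbitrary} nonzero element of $A$ (unlike in Theorem~\ref{3.12}, where $b\in\mathrm{Soc}\:A$ is a hypothesis), and $a=\lambda b$ with $\lambda\neq 0$ only says $a$ and $b$ generate the same information about socle membership --- it gives no reason for either to have finite rank. Consider $a=b=\mathbf{1}$: condition (i) holds, yet $\mathbf{1}\notin\mathrm{Soc}\:A$ in general. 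Consequently the hypothesis of Theorem~\ref{3.b} (finiteness of $\sigma(bx)$ on an open set, which forces $b\in\mathrm{Soc}\:A$) is simply not available, and the ``equal ranks implies equality'' remark cannot be invoked. As written, your proof establishes only $a=\lambda b$ with $0<\left|\lambda\right|\leq 1$ for general $b$, and the relation $\lambda\sigma'(bx)\subseteq\sigma'(bx)$ for all $x$ does not by itself force $\lambda=1$ without exploiting the socle.

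The paper avoids this by never leaving the socle: for each $y\in\mathrm{Soc}\:A$ condition (i) and Jacobson's Lemma give $\sigma'\left(\left(ay\right)x\right)\subseteq\sigma'\left(\left(by\right)x\right)$ for all $x$, and since $by\in\mathrm{Soc}\:A$ Theorem~\ref{3.12} applies to the pair $\left(ay,by\right)$ and yields $ay=by$; then $\left(a-b\right)\mathrm{Soc}\:A=\left\{0\right\}$ and $l\left(\mathrm{Soc}\:A\right)=\left\{0\right\}$ finish the proof. Your argument is repairable along similar lines --- e.g.\ after obtaining $a=\lambda b$, pick a rank one $y$ with $by\neq 0$ and a $z$ with $\sigma'(byz)=\left\{\mu\right\}$, $\mu\neq 0$; then (i) gives $\left\{\lambda\mu\right\}\subseteq\left\{\mu\right\}$, so $\lambda=1$ --- but some such step that tests the inclusion against socle elements is genuinely needed and is currently missing.
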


\begin{proof}
	If $A$ is not prime then we may proceed as in the proof of Lemma \ref{3.6} and expose two elements $a$ and $b$ such that $a \neq a+b$ and $\sigma' (ax) \subseteq \sigma' \left((a+b)x\right)$ for all $x \in A$. This proves the reverse implication. Conversely, if $A$ is prime, then since $\mathrm{Soc}\:A \neq \left\{0 \right\}$ it follows that $\mathrm{Soc}\:A$ is a minimal two-sided ideal and that $l\left(\mathrm{Soc}\:A\right) = \left\{0 \right\}$. Let $a, b \in A-\left\{0\right\}$ be arbitrary. Obviously (ii) $\Rightarrow$ (i). So assume that condition (i) holds and let $y \in \mathrm{Soc}\:A$ be arbitrary but fixed. Then by Theorem \ref{3.12} we may infer that $ay = by$. Since $y \in \mathrm{Soc}\:A$ was arbitrary, we conclude that $\mathrm{Tr}\,\left((a-b)y\right) = 0$ for all $y \in \mathrm{Soc}\:A$. Hence, by \cite[Corollary 3.6]{aupetitmoutontrace} it follows that $a-b \in l\left(\mathrm{Soc}\:A\right) = \left\{0 \right\}$. Therefore, (i) $\Rightarrow$ (ii), so the theorem is true.  
\end{proof}

To conclude we will show that if $\mathrm{Soc}\:A$ is a minimal two-sided ideal, then conditions (i) and (ii) in Theorem \ref{3.13} are equivalent whenever $b$ belongs to some \textit{inessential ideal}; that is, a two-sided ideal in which the spectrum of all elements contain at most $0$ as an accumulation point. Before that, however, we will need a little preparation:

\begin{lemma}\label{3.14}
	Let $s \in A$ and for each $x \in A$ suppose that $\sigma (sx)$ contains at most $0$ as an accumulation point for all $x \in A$. Then the Riesz projections of $s$ corresponding to nonzero spectral values have finite rank.
\end{lemma}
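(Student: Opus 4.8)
The plan is to localise at a single nonzero spectral value, pass to the corner algebra it generates, and reduce everything to a finiteness statement about that corner. Since $\sigma(s\mathbf 1)=\sigma(s)$ has at most $0$ as an accumulation point, every nonzero $\lambda\in\sigma(s)$ is isolated, so the Riesz projection $q:=p(\lambda,s)$ is well defined; fix such a $\lambda\neq 0$. By (\ref{beq1}) we have $q=sw=ws$ for some $w\in A$ commuting with $s$, and $sq=qs$, hence $qsq=sq$. As in the proof of Lemma \ref{3.4}, $B:=qAq$ is a closed semisimple subalgebra with identity $q$ satisfying $\sigma_B'(qxq)=\sigma_A'(qxq)$ for all $x\in A$. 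Writing $c:=sq=qsq$ and using the Holomorphic Functional Calculus ($sq=g(s)$ with $g(\alpha)=\alpha f_\lambda(\alpha)$, so that $\sigma(sq)\subseteq\{0,\lambda\}$ while $g$ is invertible near $\lambda$), I would show that $c$ is invertible in $B$ with $\sigma_B(c)=\{\lambda\}$.

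Next I would recast the Lemma as a property of $B$. For every $x\in A$, Jacobson's Lemma gives $\sigma_A'(xq)=\sigma_A'\big((xq)q\big)=\sigma_A'\big(q(xq)\big)=\sigma_B'(qxq)$, and as $x$ runs through $A$ the element $qxq$ runs through all of $B$; hence $\mathrm{rank}_A(q)=\sup_{y\in B}\#\sigma_B'(y)=\mathrm{rank}_B(q)$, so it suffices to prove that $q=\mathbf 1_B$ has finite rank in $B$. Moreover every $y\in B$ is a Riesz element of $B$: since $c$ is invertible in $B$, $y=c(c^{-1}y)=s\,(qc^{-1}y)$, and so $\sigma_A(y)=\sigma_A\big(s\,(qc^{-1}y)\big)$ has at most $0$ as an accumulation point by hypothesis, whence so does $\sigma_B(y)$. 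Thus $B$ is a unital semisimple Banach algebra in which the spectrum of every element accumulates at most at $0$.

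The heart of the matter is to deduce that such a $B$ is finite-dimensional, equivalently that $\mathbf 1_B\in\overline{\mathrm{Soc}\,B}$. Granting this, the argument finishes quickly: choosing a finite-rank $f\in\mathrm{Soc}\,B$ with $\|\mathbf 1_B-f\|<1$ makes $f=\mathbf 1_B-(\mathbf 1_B-f)$ invertible in $B$, whence $\mathrm{rank}_B(\mathbf 1_B)=\mathrm{rank}_B(f^{-1}f)\leq\mathrm{rank}_B(f)<\infty$, which is exactly the assertion. I expect establishing $\mathbf 1_B\in\overline{\mathrm{Soc}\,B}$ to be the main obstacle. Semisimplicity is indispensable here: the unitization of the radical Volterra convolution algebra is a unital Banach algebra in which every element has one-point spectrum, yet it is infinite-dimensional, so the conclusion can only be spectral together with the vanishing of the radical.

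For the obstacle I see two lines of attack. The first works through the quotient $B/\overline{\mathrm{Soc}\,B}$: if one can show $c\in\overline{\mathrm{Soc}\,B}$, then the image of $c$ is simultaneously invertible and zero in the quotient, forcing $B=\overline{\mathrm{Soc}\,B}$ and hence $\mathbf 1_B\in\overline{\mathrm{Soc}\,B}$ — but this shifts the difficulty onto a West–Smyth-type decomposition of the inessential element $c$. The second, more self-contained line mimics the Baire-category-plus-Scarcity argument invoked in Theorem \ref{3.a}: the sets $E_n=\{y\in B:\#(\sigma_B(y)\cap\{|z|\geq|\lambda|/2\})\leq n\}$ are closed, by upper semicontinuity of the spectrum, and cover $B$, by the Riesz property, so some $E_N$ has nonempty interior and yields a uniform bound on the number of large-modulus spectral points of elements in an open ball. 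The genuinely hard step, and the crux of the whole proof, is to propagate this local bound — past the spectral points that may accumulate at $0$ — into an honest bound on $\mathrm{rank}_B(\mathbf 1_B)$, using the invertibility of $c$ (its spectrum sitting away from the origin) to control the behaviour near $0$.
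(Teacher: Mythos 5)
Your reduction is sound as far as it goes --- passing to the corner $B=qAq$, checking that $\mathrm{rank}_A(q)=\mathrm{rank}_B(\mathbf 1_B)$, and observing via (\ref{beq1}) that every element of $B$ lies in $sA$ and therefore has spectrum accumulating at most at $0$ --- but the argument stops exactly where the lemma has to be proved. You yourself flag that the crux is to show $\mathbf 1_B\in\overline{\mathrm{Soc}\,B}$ (equivalently, that $B$ is finite-dimensional), and you offer two programmes for doing so, neither of which is carried out: the quotient route shifts the burden onto a West--Smyth-type decomposition you do not supply, and in the Baire-category route the step you call ``genuinely hard'' (propagating the bound on large-modulus spectral points past the spectrum accumulating at $0$) is precisely the content of the lemma and is left without an argument. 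As it stands this is an honest reduction plus an announced gap, not a proof.

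The missing step is a single observation for which you already have every ingredient, and it is how the paper's proof (carried out in $pAp$ with $p=p(\lambda_i,s)$) closes the argument. Since $q\mathbf 1 q=q$ is the identity of $B$ and $G(B)$ is open, there is a neighbourhood $V$ of $\mathbf 1$ in $A$ with $qxq\in G(B)$ for every $x\in V$. If some $x\in V$ had $\#\sigma_A(qx)=\infty$, then by Jacobson's Lemma $\#\sigma_{A}'(qxq)=\infty$; but $qxq\in sA$, so by the hypothesis on $s$ the infinite bounded set $\sigma_{A}'(qxq)=\sigma_{B}'(qxq)$ must be a sequence converging to $0$, and since the spectrum is closed this forces $0\in\sigma_B(qxq)$, contradicting invertibility. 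Hence $\#\sigma_A(qx)<\infty$ for all $x\in V$, and the standard Baire/Scarcity argument already invoked in Theorem \ref{3.a} yields $\mathrm{rank}\,(q)<\infty$. In short: in the corner algebra, invertibility is incompatible with an infinite Riesz spectrum, so no ``propagation past $0$'' is needed --- closedness of the spectrum handles the behaviour near $0$, and scarcity handles the rest. Note also that the full claim that $B$ is finite-dimensional, while true a posteriori, is more than you need: finiteness of $\sigma_B(qxq)$ for $x$ in a neighbourhood of $\mathbf 1$ already suffices.
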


\begin{proof}
	Let $\sigma'(s)=\{\lambda_1,\lambda_2,\dots\}$ and set, for $i\in\mathbb N$, $p:=p(\lambda_i,s)$. Recall that $pAp$ is a semisimple Banach algebra with identity $p$. There exists an open neighborhood $V$ of $\mathbf 1$ in $A$ such that $pxp$ is invertible in $pAp$ for each $x\in V$. Now suppose $x\in V$ and $\#\sigma_A(px)=\infty$. Then, by Jacobson's Lemma, $\#\sigma_A(pxp)=\infty=\#\sigma_{A}'(pxp)$, and, since $p \in sA$, it follows from our hypothesis on $s$ that $\sigma_{A}'(pxp)$ is a sequence converging to $0$. But this means
	$\sigma_{pAp}(pxp)$ contains a sequence converging to zero, from which it follows (since the spectrum is closed) that $pxp$ cannot be invertible in $pAp$ giving a contradiction.
	So $\#\sigma_A(px)<\infty$ for all $x\in V$ and a standard application of the Scarcity Theorem then says $\#\sigma_A(px)<\infty$ for all $x\in A$. Thus $\mathrm{rank}\,(p)<\infty$.
\end{proof}

\begin{theorem}\label{3.15}
	Suppose that $\mathrm{Soc}\:A$ is a minimal two-sided ideal. Let $0 \neq a \in A$ and let $0 \neq b \in A$ such that $\sigma (bx)$ has at most $0$ as an accumulation point for all $x \in A$. Then the following are equivalent:
	\begin{itemize}
		\item[\textnormal{(i)}]
		$\sigma' (ax) \subseteq \sigma' (bx)$ for all $x \in A$.
		\item[\textnormal{(ii)}]
		$a = b$.
	\end{itemize}
\end{theorem}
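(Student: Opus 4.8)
The plan is to reduce the apparently weaker hypothesis on $b$ to the setting of Theorem \ref{3.13} by first upgrading $b$ from an ``inessential'' element to a genuine socle element. The key observation is Lemma \ref{3.14}: since $\sigma(bx)$ has at most $0$ as an accumulation point for all $x$, each Riesz projection $p(\lambda_i, b)$ corresponding to a nonzero spectral value $\lambda_i \in \sigma'(b)$ has finite rank. The crux will be to show that $a$ actually has \emph{finite rank}, forcing $a \in \mathrm{Soc}\,A$, whence the spectral containment hypothesis together with the finiteness of $\sigma(ax)$ allows us to invoke Theorem \ref{3.b}.

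First I would show $a$ has finite rank. The natural route is to multiply the inclusion $\sigma'(ax) \subseteq \sigma'(bx)$ by a Riesz projection localizing $b$ onto a single nonzero spectral value. Fix a nonzero $\lambda_i \in \sigma'(b)$ and let $p = p(\lambda_i, b)$, which by Lemma \ref{3.14} is a finite-rank idempotent lying in $bA \cap Ab$. The idea is that replacing $x$ by $px$ (or $pxp$) should confine the nonzero spectrum of $bx$ into a bounded/finite regime controlled by the finite-rank piece, and via the containment force $\#\sigma'(apx)$ to be uniformly bounded. More carefully, since the $p(\lambda_i,b)$ are orthogonal and sum (in the relevant sense) against $b$, and since off the finitely many $\lambda_i$ the spectrum only accumulates at $0$, one expects that $\sigma'(bx)$ is itself a set whose cardinality is controlled in a way that, through the inclusion, bounds $\#\sigma'(ax)$ uniformly in $x$. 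I would use the containment $\sigma'(ax)\subseteq\sigma'(bx)$ together with the fact that $\sigma(bx)$ has only $0$ as a possible accumulation point to conclude $\sigma(ax)$ likewise has at most $0$ as an accumulation point for every $x$, and then run the Lemma \ref{3.14} argument with $a$ in place of $s$ to deduce each Riesz projection of $a$ has finite rank; combined with a uniform bound this should give $\mathrm{rank}\,(a) < \infty$.

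Once $a \in \mathrm{Soc}\,A$, I would localize to reduce $b$ to a socle element as well. The difficulty is that $b$ need not itself be finite rank, so Theorem \ref{3.13} does not apply directly. The remedy is to work spectral-value by spectral-value: for each nonzero $\lambda_i$ set $p_i = p(\lambda_i,b) \in \mathrm{Soc}\,A$ and examine the element $bp_i$, which is a finite-rank element having $\{\lambda_i\}$ (with appropriate multiplicity) as its nonzero spectrum. Using the containment $\sigma'(ax)\subseteq\sigma'(bx)$ and Jacobson's Lemma to move the $p_i$ around, I would aim to show that the finite-rank components of $a$ match those of $b$ on each spectral piece, i.e.\ that $a p_i = b p_i$ and $p_i a = p_i b$, by comparing traces of the rank-one constituents exactly as in the proof of Theorem \ref{3.b}. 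Summing over the (at most countably many, but effectively finitely many relevant) spectral values then yields $a = b$.

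The main obstacle I anticipate is the finite-rank step for $a$: the hypothesis only gives a \emph{containment} $\sigma'(ax)\subseteq\sigma'(bx)$, not equality, and $b$ is merely inessential rather than finite rank, so I cannot immediately transfer finiteness of $\#\sigma'(bx)$ to $a$. The delicate point is to extract a \emph{uniform} bound on $\#\sigma'(ax)$ over all $x$ from the mere fact that $\sigma'(bx)$ accumulates only at $0$; this is where Lemma \ref{3.14} applied to $a$, together with a Scarcity-Theorem argument on a neighbourhood of $\mathbf 1$ and the open-denseness of $E(a)$, must be combined carefully. Once $a$ is known to be in the socle, the remaining work is essentially a reprise of the trace and Riesz-projection computations already carried out in Theorem \ref{3.b} and Theorem \ref{3.13}, so I expect it to be routine by comparison.
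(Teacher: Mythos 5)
Your opening reduction is where the argument breaks: the claim that $a$ must have finite rank is not merely delicate, it is false, so no amount of care with the Scarcity Theorem will rescue it. The theorem's conclusion is $a=b$, and $b$ is only assumed inessential, not of finite rank; for instance, in $A=B(H)$ take $b$ compact with infinitely many nonzero eigenvalues and $a=b$. Then (i) holds and $\sigma(bx)$ accumulates at most at $0$ for every $x$, yet $\mathrm{rank}\,(a)=\mathrm{rank}\,(b)=\infty$, so $a\notin\mathrm{Soc}\:A$. What you can legitimately extract (that $\sigma(ax)$ accumulates only at $0$, and that each Riesz projection of $a$ at a nonzero spectral value has finite rank, via Lemma \ref{3.14}) never upgrades to a uniform bound on $\#\sigma'(ax)$. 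Consequently Theorem \ref{3.b} and Theorem \ref{3.13} are unavailable, since both need finiteness of rank or of $\sigma(bx)$. Your second stage has a related gap: even if you establish $ap_i=bp_i$ and $p_ia=p_ib$ for every Riesz projection $p_i$ of $b$ at a nonzero spectral value, these projections may be infinite in number and in any case need not sum to $\mathbf{1}$ (there is also the spectral piece at $0$), so ``summing over the spectral values'' does not force $a=b$.

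The paper's proof sidesteps both problems by never trying to place $a$ or $b$ in the socle. For each fixed $x$ it works with the Riesz projections of the single element $bx$: given $\lambda\in\sigma'(bx)$, the projection $p=p(\lambda,bx)$ has finite rank by Lemma \ref{3.14}, and the hypothesis applied to elements of the form $xpz$ together with Theorem \ref{3.12} yields $axp=bxp$ and $pax=pbx$; orthogonality of $axp$ and $ax\left(\mathbf{1}-p\right)$ then forces $\lambda\in\sigma'(ax)$, giving $\sigma'(ax)=\sigma'(bx)$. Membership of $0$ is handled separately: when $0\notin\sigma(bx)$ the spectrum is finite, the finitely many Riesz projections do sum to $\mathbf{1}$, and one gets $ax=bx$ outright. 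Having shown $\sigma(ax)=\sigma(bx)$ for all $x$, the conclusion $a=b$ comes from Theorem \ref{1.1}, which requires no finiteness whatsoever. If you want to salvage your outline, replace ``show $a\in\mathrm{Soc}\:A$'' with ``show $\sigma(ax)=\sigma(bx)$ for all $x$'' as the target and let Theorem \ref{1.1} do the final step.
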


\begin{proof}
	Let $0 \neq a \in A$ and $b \in A$. Surely, (ii) $\Rightarrow$ (i). So assume that condition (i) holds. We claim that $\sigma (ax) = \sigma (bx)$ for all $x \in A$: Let $x \in A$ be arbitrary. It will suffice to show that $\sigma' (ax) = \sigma' (bx)$ and $0 \in \sigma (ax) \Leftrightarrow 0 \in \sigma (bx)$. If $\sigma (bx) = \left\{0 \right\}$, then $\sigma' (ax) = \sigma' (bx) = \emptyset$. So assume that $\sigma (bx) \neq \left\{0 \right\}$ and let $\lambda \in \sigma' (bx)$. Since $\sigma' (bx)$ is either finite or a sequence converging to zero, we may consider the Riesz projection of $bx$ associated with $\lambda$, say $p := p\left(\lambda, bx\right)$. Now, by Lemma \ref{3.14} it follows that $p \in \mathrm{Soc}\:A$. Consequently, by hypothesis and Theorem \ref{3.12}, we may infer that $axp = bxp$. Moreover, since $bxp = pbx$, by condition (i), Jacobson's Lemma and Theorem \ref{3.12} it follows that $pax = pbx = axp$. Hence,
	$$\left(ax\left(\mathbf{1}-p\right)\right)(axp) = (axp)\left(ax\left(\mathbf{1}-p\right)\right) = 0.$$
	Thus, since $ax = ax\left(\mathbf{1}-p\right) + axp$, it follows from \cite[Chapter 3, Exercise 9]{aupetit1991primer} that
	$$\sigma' (ax) = \sigma'\left(ax\left(\mathbf{1}-p\right)\right) \cup \sigma' (axp) =  \sigma'\left(ax\left(\mathbf{1}-p\right)\right) \cup \sigma' (bxp).$$
	But by the Holomorphic Functional Calculus it follows that $\sigma' (bxp) = \left\{\lambda \right\}$. Hence, $\lambda \in \sigma' (ax)$. This shows that $\sigma' (ax) = \sigma' (bx)$. Suppose now that $0 \notin \sigma (bx)$. Then, by hypothesis on $b$ it must be the case that $\sigma (bx)$ is finite, say $\sigma (bx) = \left\{\alpha_{1}, \ldots, \alpha_{r} \right\}$. For each $i \in \left\{1, \ldots, r \right\}$, let $p_{i}$ denote the Riesz projection of $bx$ associated with $\alpha_{i}$. By condition (i) and Theorem \ref{3.12} it follows that $axp_{i} = bxp_{i}$ for all $i \in \left\{1, \ldots, r \right\}$. But by the Holomorphic Functional Calculus $p_{1} + \cdots +p_{r} = \mathbf{1}$. Hence,
	\begin{eqnarray*}
		ax & = & ax \left(p_{1} + \cdots +p_{r} \right) = axp_{1} + \cdots +axp_{r} \\
		& = & bxp_{1} + \cdots +bxp_{r} = bx \left(p_{1} + \cdots +p_{r} \right) \,\;=\,\; bx.
	\end{eqnarray*}
	So, $0 \notin \sigma (ax)$. Similarly, $0 \notin \sigma (ax)$ yields $bx = ax$ and consequently $0 \notin \sigma (bx)$. Hence, $0 \in \sigma (ax) \Leftrightarrow 0 \in \sigma (bx)$. This proves our claim. By Theorem \ref{1.1} we may therefore conclude that $a = b$, which completes the proof.    
\end{proof}

\bibliographystyle{amsplain}
\bibliography{Spectral}

\end{document}